\DeclareMathOperator*{\argmin}{arg\,min}
\begin{document}
\newcommand{\COMMENT}[1]{\textcolor{red}{#1}}
\newcommand{\INSERT}[1]{\textcolor{blue}{#1}}
\renewcommand{\thefootnote}{\arabic{footnote}}
\title{HiDeNN-PGD: reduced-order hierarchical deep learning neural networks }
\newcommand\blfootnote[1]{%
\begingroup{
\renewcommand\thefootnote{}\footnote{#1}%
\addtocounter{footnote}{-1}%
}
\endgroup
}
\author{
Lei Zhang\textsuperscript{1,3}\and
Ye Lu\textsuperscript{2}\and
Shaoqiang Tang\textsuperscript{1*}\and
Wing Kam Liu\textsuperscript{2*} 
}

\institute{\textsuperscript{1} HEDPS and LTCS, College of Engineering, Peking University, Beijing 100871, China.
\and
\textsuperscript{2} Department of Mechanical Engineering, Northwestern University,  Evanston, USA. 
\textsuperscript{3} Visiting student at Department of Mechanical Engineering, Northwestern University
\and
\textsuperscript{*} Corresponding Author: w-liu@northwestern.edu, maotang@pku.edu.cn
}

\date{Received: date / Accepted: date}

\maketitle
\begin{large}
\begin{abstract}
This paper presents a proper generalized decomposition (PGD) based reduced-order model of hierarchical deep-learning neural networks (HiDeNN). The proposed HiDeNN-PGD method keeps both advantages of HiDeNN and PGD methods. The automatic mesh adaptivity makes the HiDeNN-PGD more accurate than the finite element method (FEM) and conventional PGD, using a fraction of the FEM degrees of freedom. The accuracy and convergence of the method have been studied theoretically and numerically, with a comparison to different methods, including FEM, PGD, HiDeNN and Deep Neural Networks. In addition, we theoretically showed that the PGD converges to FEM at increasing modes, and the PGD error is a direct sum of the FEM error and the mode reduction error.  The proposed HiDeNN-PGD performs high accuracy with orders of magnitude fewer degrees of freedom, which shows a high potential to achieve fast computations with a high level of accuracy for large-size engineering problems.
\end{abstract}

\keywords{ hierarchical deep-learning neural networks, proper generalized decomposition, canonical tensor decomposition, finite element method, reduced order methods, convergence study, error bound}

\section{Introduction}
Despite the constantly increasing computer power, numerical simulations of physical systems with numerous degrees of freedom remain computationally prohibitive. These kinds of problems arise usually in simulation-based engineering applications and the repetitive manipulation (or modification) of the mesh system has been identified as a key time-costly issue in standard finite element method (FEM) \cite{belytschko2013nonlinear}. This has been a motivation for developing the isogeometric approaches \cite{hughes2005isogeometric}. 

In recent years, the deep neural network (DNN) has shown some interesting features in handling the solution of physics constrained systems. The universal approximation theorem \cite{hornik1989multilayer,cybenko1989approximation} and the natural scalability of DNN have been the foundation of its superior performance for large systems. This has thus motivated the use of DNN to approximate the solution of partial differential equations (PDEs) \cite{weinan2017deep,raissi2019physics,li2019clustering}. A recently developed Hierarchical Deep-learning Neural Network (HiDeNN) method \cite{zhang2021hierarchical,saha2021hierarchical} falls within this perspective. The so-called HiDeNN is developed by constraining the weights and biases of DNN to mesh coordinates to build multiple dimensions finite element, meshfree, isogeometric, B-spline, and NURBs interpolation functions. HiDeNN allowed the automatic mesh adaptivity and showed a good potential to prevent large mesh systems and the standard time-consuming mesh refinement procedure.
In order to further enhance the efficiency of HiDeNN, this work proposed HiDeNN-PGD, a reduced-order model of HiDeNN using the proper generalized decomposition (PGD).

The PGD-based model reduction methods rely on the idea of separation of variables, and are usually written in the format of canonical decomposition. This kind of method was originally proposed in an \textit{a priori} setting \cite{ammar2006new,chinesta2011short,bhattacharyya2018multi,modesto2015proper}, in which the separated functions are computed on-the-fly by solving the PDEs. It has gained increased popularity in recent years. For overcoming the intrusiveness and extending the applicability of the method, the \textit{a posteriori} data-driven PGD \cite{lu2018adaptive,lu2019datadriven,lu2018multi,diez2018algebraic}  has also been developed more recently. In contrast to \textit{a priori} PGD, the \textit{a posteriori} method uses a  database to learn the separated functions and thus can be used as regression for constructing reduced order surrogate models. 

In our work, we adopted the same idea of separation of variables, in particular, the separation of space variables for solving PDEs in the HiDeNN framework, leading to the so-called HiDeNN-PGD method, which is expected to have reduced degrees of freedom with a high accuracy. Indeed, the space separated PGD, leading to lower dimensional space functions, is usually considered for reducing the computational complexity of 3D separable domains (see e.g. \cite{bognet2014separated}).  However, the convergence aspect with respect to the mesh refinement and number of modes has been less studied. 

We investigated the convergence aspect of the PGD approach in this paper. Based on the approximation function spaces, we  analyzed the numerical error and convergence associated with different approaches and compare their error bounds. It can be shown that the HiDeNN-PGD is more accurate than both FEM and conventional PGD, thanks to the adaptivity achieved by HiDeNN. For further enhancing the optimality of PGD modes, we suggested fixing the number of modes firstly and solve them together. Hence, HiDeNN-PGD can require fewer modes than PGD. This is advantageous for high-dimensional problems where the optimality of modes is crucial. The numerical examples have confirmed our theoretical analysis. In addition, we numerically investigated the relationship between the approximation error and the modes and proposed a strategy to select the prescribed mode number in HiDeNN-PGD. The proposed HiDeNN-PGD has shown a high potential to achieve high performance computing with high accuracy.

The paper is organized as follows. Section 2 gives  a brief overview of different numerical methods for partial differential equations (PDEs), in which the approximation function spaces are described. The error analysis based on a class of PDEs is given in Section 3. Section 4 presents the proposed HiDeNN-PGD method. Section 5 provides some numerical examples and discussions. Finally, the paper closes with some concluding remarks.

\section{ DNN, HiDeNN, FEM and canonical decomposition  based function approximation}

Function approximation is a key component in numerical solutions of partial differential equations (PDEs). In this section, we briefly review how such an approximation can be performed in terms of FEM, DNN, HiDeNN and Canonical tensor Decomposition (CD). We present their approximation function sets, which will be used in the theoretical analysis in Subsection 2.2.
We restrict the discussion to a scalar-value function ${u}(\bm{x}):\mathbb{R}^3\rightarrow \mathbb{R}$. The conclusions should be straightforwardly extended to vector functions.

\begin{figure}[h]
	\centering
	\subfigure[DNN-based interpolation function]{\includegraphics[trim={1.2cm 0 0 0},width=0.41\textwidth]{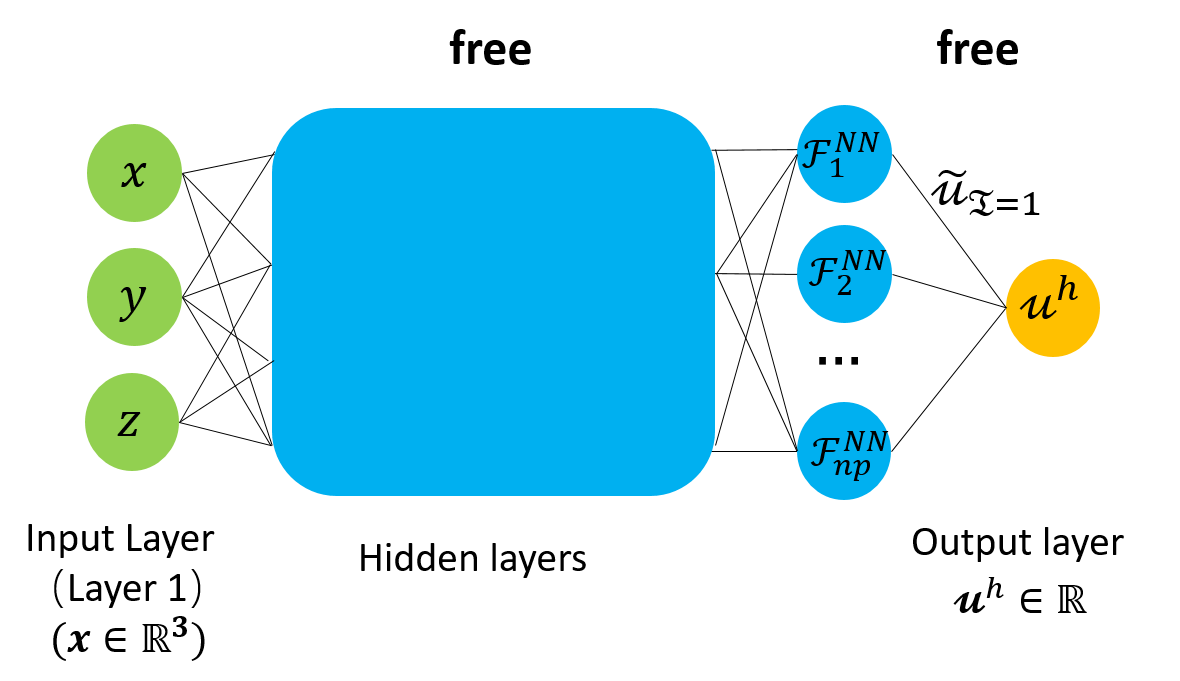}}
	\subfigure[HiDeNN interpolation function]{\includegraphics[width=0.45\textwidth]{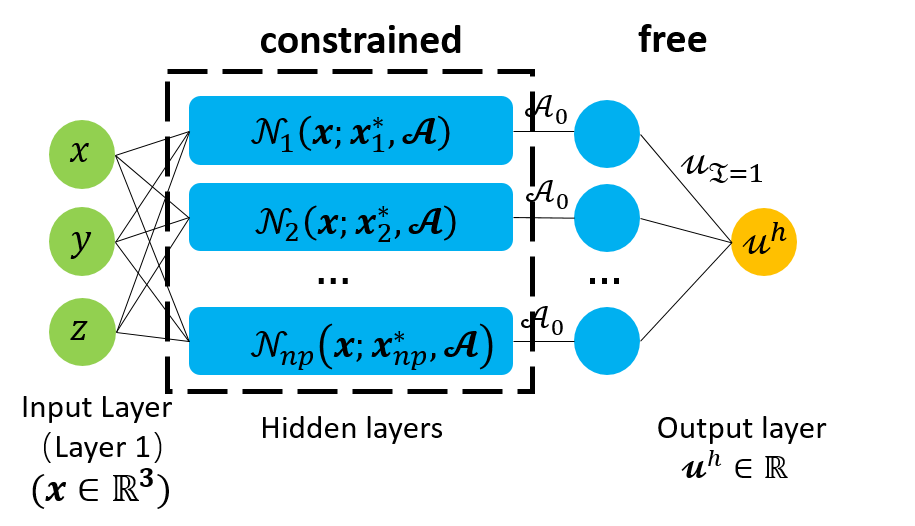}}
	\subfigure[FEM interpolation function]{\includegraphics[width=0.45\textwidth]{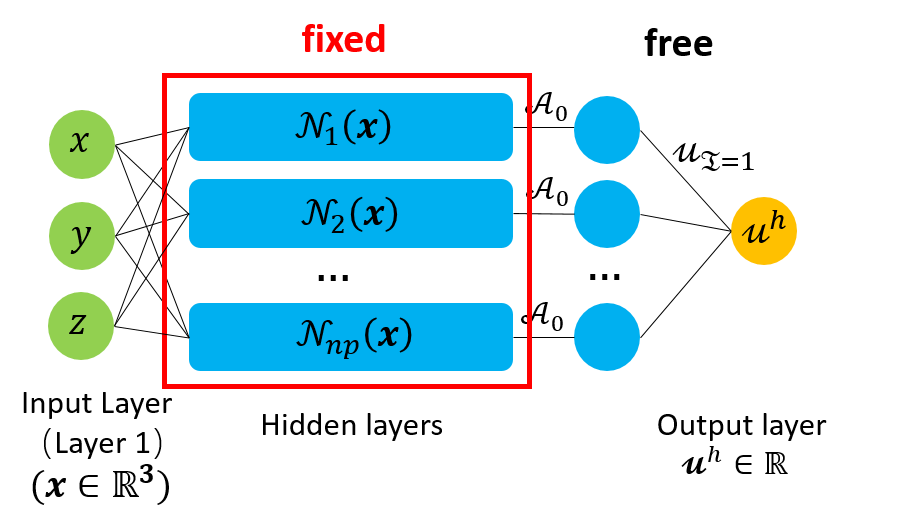}}
	\subfigure[CD interpolation function]{\includegraphics[width=0.45\textwidth]{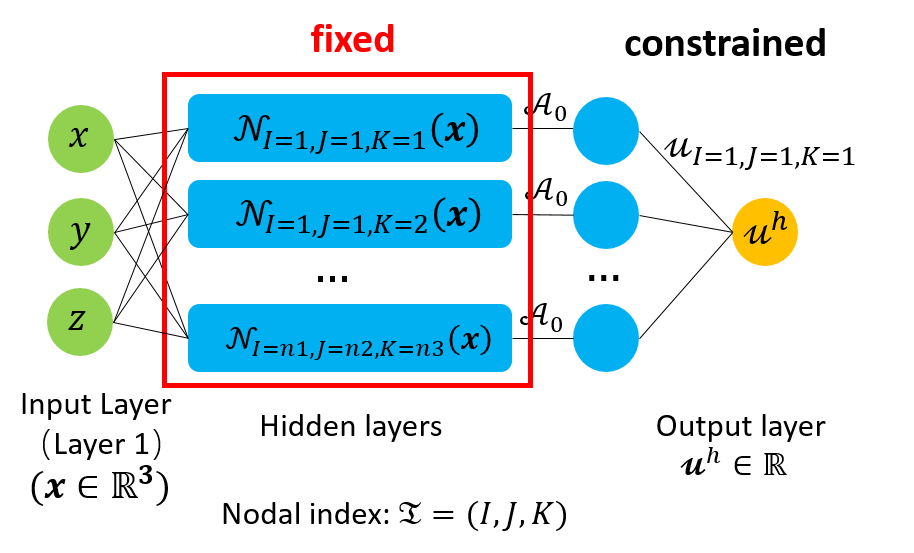}}
	\caption{Illustration for the interpolation functions in the form of DNN with $\bm{x}=(x,y,z)$ as input and $u^h$ as output. Weights and biases inside dashed line-box are constrained, and those inside the red solid line-box are fixed. $\mathcal{A}_0$ is the identity activation function defined by $\mathcal{A}_0(x)=x$. }
	\label{fig:Interpolant_DNNrepresentation}
\end{figure}

\subsection{Overview of the approximation function sets}
\textbf{Deep neural network-based method}

According to the universal approximation theorem, a deep neural network (DNN) can be designed to approximate any given continuous function to desired accuracy \cite{hornik1989multilayer,cybenko1989approximation,tang2021neural}. Thus it can be a candidate to  approximate solutions for solving PDEs \cite{wu2016physics,xiao2016quantifying,weinan2017deep,berg2018unified,raissi2017physics,raissi2019physics,sirignano2018dgm,weinan2018deep}, i.e.,
\begin{equation}
    u^h(\bm{x})=\mathcal{F}^{NN}(\bm{x}),
\end{equation}
where $\mathcal{F}^{NN}$ represents the neural network with $\bm{x}$ as input and $u^h$ as output. Note that $u^h$ can be a multidimensional vector.
For instance, in a classical feedforward neural network (FFNN) \cite{Goodfellow-et-al-2016,haykin1994neural,oishi2017computational} with $N_L$ layers, recursive relations among neurons are as follows
\begin{eqnarray} \label{eq:transfer-function}
&&a^{l}_{j=1}=x, a^{l}_{j=2}=y, a^{l}_{j=3}=z, \text{ if } l=1 \text{ (input layer)}; \\
&&a^{l}_j=\mathcal{A}(\sum_{i=1}^{N_N(l-1)}{W^{l}_{ij} a^{l-1}_i + b^{l}_j}), \text{ if } l\in\{2,...,N_L-1\}  \text{ (hidden layer)}.
\end{eqnarray}
Hence, the output layer can be defined as
\begin{eqnarray}
\mathcal{F}^{NN}_j&=&a^{N_L}_j=\sum_{i=1}^{N_N(N_L-1)}{W^{N_L}_{ij} a^{N_L-1}_i + b^{N_L}_j}, \text{ if } l=N_L \text{ (output layer)},
\end{eqnarray}
with the detailed definition of the notations in Table \ref{table:ffnn-notation}. Therefore, once the weights $\bm{W}$, biases $\bm{b}$ and activation functions $\bm{\mathcal{A}}$ have been chosen, $\mathcal{F}^{NN}$ can serve as an approximation function with the input variable as $\bm{x}=(x,y,z)$. 

\begin{table}[h!]
\centering
\caption{Notation table of variables used in the feed forward neural network} 
\label{table:ffnn-notation}
\begin{tabular}{cl}
  \hline
      $\bm{x}=(x,y,z)$ & Space coordinates \\
      $l$ & Counting index for number of layers \\
      $i$ & Counting index for neurons in layer $l-1$ \\
      $j$ & Counting index for neurons in layer $l$ \\
      $N_L$ & Number of layers in the neural network \\
      $N_N(l)$ & Number of neurons in layer $l$ \\
      $W_{ij}^{l}$ & Weight connecting the $i^\text{th}$ neuron in layer $l-1$ to the $j^\text{th}$ in layer $l$ \\
      $b_{j}^{l}$ & Bias of the $j^\text{th}$ neuron in layer $l$\\
      $a_j^{l}$ & Neuron value for $j^{th}$ neuron in $l^{th}$ layer \\
      $\mathcal{A}$ & Activation function \\
      $\mathcal{F}^{NN}$ & Feedforward neural network function\\
 \hline
\end{tabular}
\end{table}

The approximation function set formed by a general NN is
\begin{equation}
    \mathcal{N}^h=\left\{ u^h(\bm{x}) \big | u^h=\mathcal{F}^{NN}(\bm{x};\bm{W},\bm{b},\bm{\mathcal{A}}), W^l_{ij}\in \mathbb{R}, b^l_j\in \mathbb{R} \right\},
\end{equation}
where $\mathcal{F}^{NN}(\bm{x};\bm{W},\bm{b},\bm{\mathcal{A}})$ denotes an NN with the input $\bm{x}$ and depends on weights $\bm{W}$, biases $\bm{b}$ and activation functions $\bm{\mathcal{A}}$.

For interpretation, DNN can be somehow  rewritten in the form of shape functions associated to nodal values of $ u^h$. In this way, the function approximation reads
\begin{equation}
    u^h=\sum_{\mathcal{I}=1}^{np} \mathcal{F}^{NN}_\mathcal{I}(\bm{x};\bm{W},\bm{b},\bm{\mathcal{A}}) \mathscr{u}_{\mathcal{I}},
    \label{eq:InterpolationNN}
\end{equation}
as illustrated in Fig. \ref{fig:Interpolant_DNNrepresentation}(a). $\mathcal{F}^{NN}_\mathcal{I}$ represents the value of the $\mathcal{I}$-th neuron in the last hidden layer, i.e., the output of the previous hidden layers. $\mathscr{u}_{\mathcal{I}}$ is the corresponding weight connecting the output layer with the last hidden layer.
This interpolation form may provide a more interpretable structure for DNN. 

In multidimensional cases, such as for 3D mechanical problems, the above equation can be straightforwardly applied to each component of displacement fied as follows
\begin{equation}
    u_x^h=\sum_{\mathcal{I}=1}^{np} \mathcal{F}^{NN}_\mathcal{I}(\bm{x};\bm{W},\bm{b},\bm{\mathcal{A}}) \mathscr{u}_{x\mathcal{I}},\\
\end{equation}
\begin{equation}
    u_y^h=\sum_{\mathcal{I}=1}^{np} \mathcal{F}^{NN}_\mathcal{I}(\bm{x};\bm{W},\bm{b},\bm{\mathcal{A}}) \mathscr{u}_{y\mathcal{I}},
\end{equation}
\begin{equation}
    u_z^h=\sum_{\mathcal{I}=1}^{np} \mathcal{F}^{NN}_\mathcal{I}(\bm{x};\bm{W},\bm{b},\bm{\mathcal{A}}) \mathscr{u}_{z\mathcal{I}},
\end{equation}




\textbf{HiDeNN}

 The recently developed HiDeNN method \cite{zhang2021hierarchical} uses a similar DNN structure of \eqref{eq:InterpolationNN} with additional constraints to build a family of function approximations. Similar to the  FEM, the continuous domain
$\Omega$ is discretized by a mesh with $np$ nodes $\bm{x}_1,\bm{x}_2,\cdots,\bm{x}_{np}$. Then the finite element shape functions $N_{\mathcal{I}}$ can be constructed by the neural network block, namely, 
\begin{equation}
     \mathcal{N}_{\mathcal{I}}(\bm{x};\bm{W},\bm{b},\bm{\mathcal{A}})
\end{equation}
where $\mathcal{I}=1,2,\cdots,np$. Different from $\mathcal{F}_{\mathcal{I}}^{NN}(\bm{x};\bm{W},\bm{b},\bm{\mathcal{A}})$ in (\ref{eq:InterpolationNN}), $\mathcal{N}_{\mathcal{I}}(\bm{x};\bm{W},\bm{b},\bm{\mathcal{A}})$ precisely equals to the finite element shape function $N_{\mathcal{I}}(\bm{x})$ with inputs $\bm{x}$ and an output $N_{\mathcal{I}}$, satisfying the following constraints for  shape functions automatically, i.e.
\begin{eqnarray}
    \sum_{\mathcal{I}=1}^{np} \mathcal{N}_{\mathcal{I}}(\bm{x};\bm{W},\bm{b},\bm{\mathcal{A}})=1, \mathcal{N}_{\mathcal{I}}(\bm{x}_{\mathcal{J}};\bm{W},\bm{b},\bm{\mathcal{A}})=\delta_{\mathcal{I}\mathcal{J}}.
    \label{eq:Constraint}
\end{eqnarray}
With Kronecker Delta constraints, we can apply Dirichlet boundary conditions directly similar to that of the finite element method, 
so that all the weights $\bm{W}$,  and biases $\bm{b}$ are functions of nodal coordinates $\bm{x}_I$. Thus we can rewrite the shape function explicitly in terms of $\bm{x}_I$ as
\begin{equation}
    \mathcal{N}_{\mathcal{I}}(\bm{x};\bm{x}_{\mathcal{I}}^*,\bm{\mathcal{A}}) = \mathcal{N}_{\mathcal{I}}(\bm{x};\bm{W},\bm{b},\bm{\mathcal{A}}),
\end{equation}
where $\bm{x}_{\mathcal{I}}^*$ denotes the support of $N_{\mathcal{I}}(\bm{x})$, e.g. in linear 1D cases $\bm{x}_{\mathcal{I}}^*=[x_{{\mathcal{I}}-1},x_{\mathcal{I}},x_{{\mathcal{I}}+1}]$.

Combining such neural network blocks for the entire mesh gives the final form of HiDeNN, as shown in Fig. \ref{fig:1D_DNN_representation}(b). This results in the approximation function set
\begin{equation}
    \mathcal{H}^h=\left\{ u^h(\bm{x}) \bigg | u^h=\sum_{\mathcal{I}=1}^{np} \mathcal{N}_{\mathcal{I}}(\bm{x};\bm{x}_{\mathcal{I}}^*,\bm{\mathcal{A}})\mathscr{u}_{\mathcal{I}}, \mathscr{u}_{\mathcal{I}}\in\mathbb{R} \right\}.
\end{equation}

The parametric expression with nodal positions $\bm{x}_{\mathcal{I}}^*$ allows automatic r-adaptivity, and accordingly improves the local and global accuracy of the interpolant.

\textbf{Finite element method}

The approximation function set $\mathcal{H}^h$ degenerates to the FE approximation function set $\mathcal{V}^h$ \cite{belytschko2013nonlinear} when the nodal position is fixed, which reads
\begin{equation}
    \mathcal{V}^h=\left\{u^h(\bm{x}) \bigg | u^h=\sum_{\mathcal{I}=1}^{np} N_{\mathcal{I}}(\bm{x}) u_{\mathcal{I}}, u_{\mathcal{I}}\in\mathbb{R} \right\}.
\end{equation}

From the DNN viewpoint, this corresponds to fixing the weights and biases that are functions of $\bm{x}_{\mathcal{I}}^*$, as shown in Fig. \ref{fig:Interpolant_DNNrepresentation}(b). 


\textbf{Canonical tensor decomposition}

Under the assumption of separation of variables, the function $\bm{u}$ may be approximated
by the sum of the products of multiple 1D functions, i.e.,
\begin{equation}
    u^h(\bm{x})=u^h(x,y,z)=\sum_{q=1}^Q X^{(q)}(x)Y^{(q)}(y)Z^{(q)}(z),
    \label{eq:SeparaRepresentation}
\end{equation}
where $Q$ is the number of modes, and the product of $X^{(q)}, Y^{(q)},Z^{(q)}$ provides a mode for the interpolation function.
This form or concept is known as CD \cite{kolda2009tensor}. The so-called PGD method has adopted this concept for solving PDEs \cite{ammar2006new,gonzalez2010recent} and for data learning \cite{lu2018adaptive,lu2019datadriven,blal2019non}. 

Thanks to the separation of variables, only shape functions of reduced dimension are needed. In (\ref{eq:SeparaRepresentation}),
1D FE shape functions can be used for a 3D problem, namely,
\begin{eqnarray}
    \label{eq:MSDisX}
    X^{(q)}(x)&=&\sum_{I=1}^{n_1} N_I(x)\beta_I^{(q)}, \\
    \label{eq:MSDisY}
    Y^{(q)}(y)&=&\sum_{J=1}^{n_2} N_J(y)\gamma_J^{(q)}, \\
    \label{eq:MSDisZ}
    Z^{(q)}(z)&=&\sum_{K=1}^{n_3} N_K(z)\theta_K^{(q)}.
\end{eqnarray}
Here, $n_1, n_2, n_3$ are the number of nodes in $x,y,z$ directions, respectively.
Thus the corresponding approximation function set for a given $Q$ modes is
\begin{eqnarray}
    \mathcal{M}_Q^h=&\Bigg \{ &u^h(\bm{x}) \bigg |u^h=\sum_{q=1}^Q \left( \sum_{I=1}^{n_1} N_I(x)\beta_I^{(q)} \right) \left( \sum_{J=1}^{n_2} N_J(y)\gamma_J^{(q)} \right) \left( \sum_{K=1}^{n_3} N_K(z)\theta_K^{(q)} \right), \\ \nonumber 
    &&\beta_I^{(q)},\gamma_J^{(q)},\theta_K^{(q)}\in\mathbb{R} \Bigg \}.
\end{eqnarray}

Note that the interpolation function in (\ref{eq:SeparaRepresentation}) can be rearranged as
\begin{equation}
    u^h(x,y,z)=\sum_{I=1}^{n_1}\sum_{J=1}^{n_2}\sum_{K=1}^{n_3} N_I(x) N_J(y) N_K(z) \left( \sum_{q=1}^Q \beta^{(q)}_I \gamma^{(q)}_J \theta^{(q)}_K \right)
    \label{eq:MSExpand}
\end{equation}
which is regarded as a finite element interpolation function with $N_I(x)N_J(y)N_K(z)$ as shape
functions and $\left( \sum_{q=1}^Q \beta^{(q)}_I \gamma^{(q)}_J \theta^{(q)}_K \right)$ as coefficients. Fig. \ref{fig:Interpolant_DNNrepresentation}(d) illustrates a DNN format of (\ref{eq:MSExpand}), which will be the basis for the proposed HiDeNN-PGD method.

Multidimensional shape functions of CD, i.e., the product of 1D shape functions, are fixed and determined by nodal positions along each direction. In addition, the nodal values in the last layer are constraint in the form of the tensor product, i.e.,
\begin{equation}
    u_{(I,J,K)}=\sum_{q=1}^{Q}\beta^{(q)}_{I}\gamma^{(q)}_J\theta^{(q)}_K.
\end{equation}

When a few modes may represent the function $u^h(x,y,z)$, this method is advantageous in terms of lower integration complexity and less degrees of freedom (DoFs) \cite{ammar2006new,chinesta2013proper}. The DoFs in (\ref{eq:MSExpand}) is of the order $O((n_1+n_2+n_3)Q)$, which is linear with the spatial dimension and far smaller than traditional methods (e.g., FEM). 


\subsection{HiDeNN-PGD: Reduced order HiDeNN via PGD}
\label{ssec:HiDeNN-PGD}

HiDeNN gets better accuracy compared with classical FEM due to the adaptivity of nodal positions. More DoFs might result in more cost. On the other hand, representation of separated variables provides a reduced order model to improve the efficiency but might lose accuracy. Here, we propose HiDeNN-PGD, a reduced-order model of HiDeNN via PGD, which seeks to accomplish an optimized balance between accuracy and computational cost.

In HiDeNN-PGD, the shape functions in each direction are written in the DNN format, namely, (\ref{eq:MSDisX}-\ref{eq:MSDisZ}) are replaced by 1D HiDeNN interpolants (refer to Appendix \ref{sec:1DHiDeNN}),
\begin{eqnarray}
    X^{(q)}(x)&=&\sum_{I=1}^{n_1} \mathcal{N}_I(x;\bm{x}_I^*,\bm{\mathcal{A}})\beta_I^{(q)}, \\
    Y^{(q)}(y)&=&\sum_{J=1}^{n_2} \mathcal{N}_J(y;\bm{y}_J^*,\bm{\mathcal{A}})\gamma_J^{(q)}, \\
    Z^{(q)}(z)&=&\sum_{K=1}^{n_3} \mathcal{N}_K(z;\bm{z}_K^*,\bm{\mathcal{A}})\theta_K^{(q)},
\end{eqnarray}
where $\mathcal{N}_I(x;\bm{x}_I^*,\bm{\mathcal{A}}), \mathcal{N}_J(y;\bm{y}_J^*,\bm{\mathcal{A}}), \mathcal{N}_K(z;\bm{z}_K^*,\bm{\mathcal{A}})$ are the 1D HiDeNN shape functions in $x,y,z$ directions, respectively.
Thus the interpolation function set is defined by
\begin{eqnarray}
\label{eq:HiDeNNPGD_FunSet}
    &&\mathcal{G}_Q^h= \\ \nonumber
    &&\Bigg \{ u^h \bigg |u^h=\sum_{q=1}^Q \left( \sum_{I=1}^{n_1} \mathcal{N}_I(x;\bm{x}_I^*,\bm{\mathcal{A}})\beta_I^{(q)} \right) \left( \sum_{J=1}^{n_2} \mathcal{N}_J(y;\bm{y}_J^*,\bm{\mathcal{A}})\gamma_J^{(q)} \right) \left( \sum_{K=1}^{n_3} \mathcal{N}_K(z;\bm{z}_K^*,\bm{\mathcal{A}})\theta_K^{(q)} \right) \Bigg \}.
\end{eqnarray}
Since adaptivity occurs only in each direction, the mesh is always regular.

\subsection{Relationship among the NN, HiDeNN, FEM, CD and HiDeNN-PGD approximation function sets}

In this subsection, we explore the relationship among FEM, NN, HiDeNN, CD and HiDeNN-PGD. 

Assume that CD, FEM are based on the same regular mesh with $n_1,n_2,n_3$ nodes along $x,y,z$ directions. This mesh also serves as an initial guess of $\bm{x}_I^*$ in HiDeNN and HiDeNN-PGD. The shape functions of FEM is the product of 1D shape functions, i.e., the shape function associated with the node $(x_i,y_j,z_k)$ is $N_{(I,J,K)}(x,y,z)=N_I(x) N_J(y) N_K(z)$. NN has a more general structure than HiDeNN, and might be fully-connected. 

By definition, we have the following relationship among approximation function sets of NN, HiDeNN, FEM and CD:
\begin{equation}
    \mathcal{M}_Q^h \subset \mathcal{V}^h \subset \mathcal{HI}^h \subset \mathcal{N}^h.
    \label{eq:RelationshipAmongMethods}
\end{equation}
Especially when $Q$ is big enough ($Q\geq\min\{n_1,n_2\}$ for 2D and $Q\geq\min\{n_1 n_2,n_2 n_3, n_3 n_1\}$ for 3D), we have
\begin{equation}\label{eq:RelationshipAmongMethodsBigQ}
    \mathcal{M}^h_Q=\mathcal{V}^h\subset\mathcal{G}^h_Q \subset \mathcal{H}^h \subset \mathcal{N}^h, 
\end{equation}
as illustrated in Fig. \ref{fig:Relationship}.

\begin{figure}[h]
	\centering
	\includegraphics[width=0.8\textwidth]{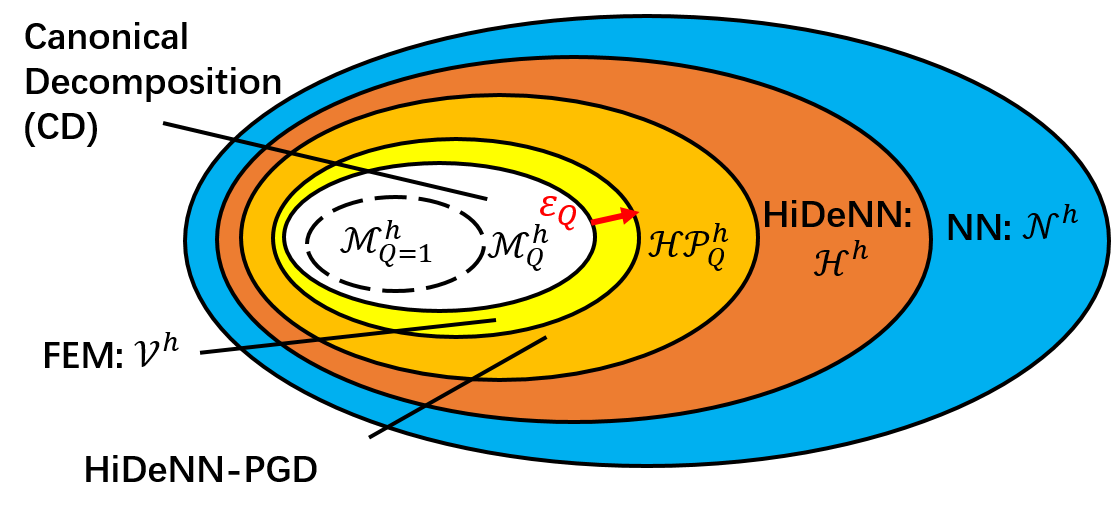}
	\caption{Illustration for relationship among interpolation function sets of CD, FEM, HiDeNN-PGD, HiDeNN and NN. Especially, when $Q$ tends to infinity, $\mathcal{M}_Q^h$ approaches to $\mathcal{V}^h$. When $Q$ is big enough, FEM interpolation set $\mathcal{V}^h$ is the subset of HiDeNN-PGD one $\mathcal{G}^h_Q$.}
	\label{fig:Relationship}
\end{figure}

The above conclusions (\ref{eq:RelationshipAmongMethods})-(\ref{eq:RelationshipAmongMethodsBigQ}) are based on the following observations:
\begin{description}
	\item[$\bullet$] According to (\ref{eq:MSExpand}), the CD interpolation functions can be regarded as finite element interpolation functions, which belong to $\mathcal{V}^h$, so $\mathcal{M}_Q^h \subset \mathcal{V}^h$. Especially, when $Q$ is big enough ($Q\geq\min\{n_1,n_2\}$ for 2D and $Q\geq\min\{n_1 n_2,n_2 n_3, n_3 n_1\}$ for 3D), $\mathcal{M}_Q^h$ approaches to $\mathcal{V}^h$, i.e., $\mathcal{M}^h_Q=\mathcal{V}^h$. Detailed results will be shown in Section 4. Proofs can be found in Appendix \ref{Appdix:2DConverge}.
	\item[$\bullet$] In HiDeNN, an optimization of the nodal positions is performed. Thus FEM may be regarded as a specific case in the HiDeNN with nodal coordinates fixed .
	\item[$\bullet$] HiDeNN is a class of structured NN with weights and biases as functions of nodal values and nodal positions.
	\item[$\bullet$] HiDeNN-PGD requires the mesh regular, while HiDeNN optimizes nodal positions freely, so $\mathcal{G}^h_Q \subset \mathcal{H}^h, \forall Q \in \mathbb{N}$. On the other hand, HiDeNN-PGD has more DoFs than CD under the same number of modes, so $\mathcal{M}^h_Q \subset \mathcal{G}^h_Q, \forall Q \in \mathbb{N}$. When $Q$ is small, $\mathcal{M}^h_Q$ is the subset of the intersection of $\mathcal{G}^h_Q$ and $\mathcal{V}^h$.
\end{description}

We can also summarize the DoFs for different methods in Table \ref{table:dofsmethod}. It is shown that HiDeNN-PGD and PGD have only a linear growth in terms of DoFs, whereas the DoFs of FEM and HiDeNN may grow in a polynomial manner.
\begin{table}[!htb]
\caption{Comparison of DoFs for different methods on a 3D mesh}
\centering
\begin{tabular}{|c | c | c | c | c |}
\hline
& FEM& PGD/CD& HiDeNN-PGD&HiDeNN \\ \hline
DoFs & $n_1 \times n_2 \times n_3$& $(n_1 + n_2 +  n_3) \times Q$ & $(n_1 + n_2 +  n_3) \times Q +n_1 + n_2 +  n_3$  & $n_1 \times n_2 \times n_3 +n_1 \times n_2 \times n_3$\\ \hline
\end{tabular}
\label{table:dofsmethod}
\end{table}

\section{Error analysis of FEM, CD, HiDeNN, NN-based solutions and HiDeNN-PGD for PDEs}

We consider a partial differential equation with homogeneous boundary conditions
\begin{equation}
\left\{\begin{array}{l}
    \mathcal{L}\bm{u}(\bm{x})+\bm{b}(\bm{x})=0 \text{ in } \Omega \subset \mathbb{R}^d, \\
    \bm{u}|_{\partial \Omega}=\bm{0},
\end{array}\right.
\label{eq:GeneralPDE}
\end{equation}
where $\bm{u}\in \mathbb{R}^m$ denotes an $m$-dimensional vector-valued function in a certain Hilbert space $H(\mathbb{R}^d, \mathbb{R}^m)$, $\bm{b}$ the source term, $\bm{x} \in \mathbb{R}^d$ the $d$-dimensional space coordinates, and $\mathcal{L}$ a second-order differential operator.

We assume that an energy potential $\Pi(u)$ exists, formulated in the following form
\begin{equation}
    \Pi(\bm{u})=\dfrac{1}{2}a(\bm{u},\bm{u})-(\bm{b},\bm{u}),
\end{equation}
where $a(\cdot , \cdot)$ is the symmetric and bilinear form corresponding to the second-order differential operator $\mathcal{L}$, and $(\bm{f},\bm{g})=\int_{\Omega} \bm{f} \cdot \bm{g} \mathrm{d}\bm{x}$ denotes the inner product. For example, let $a(\bm{f},\bm{g})=\int_{\Omega} \nabla \bm{f} \cdot \nabla \bm{g} \mathrm{d}\bm{x}$ for Poisson equation. 
The minimization of $\Pi(u)$ gives the solution to \eqref{eq:GeneralPDE}. This leads to a weak form of  \eqref{eq:GeneralPDE}, which reads
\begin{equation}
    \bm{u}=\argmin_{\bm{u}^* \in H(\mathbb{R}^m, \mathbb{R}^n)} \Pi[\bm{u^*}].
\end{equation}

Such a weak form is commonly adopted in interpolation theory based numerical approaches, such as the methods shown in Section 2. Denoted by $\mathcal{S}^h \subset H(\mathbb{R}^d, \mathbb{R}^m)$ the discretized approximation solution set with a characteristic mesh size $h$, the approximate solution based on this given interpolation function set is then
\begin{equation}
    \bm{u}^h=\argmin_{\bm{u}^{h*} \in \mathcal{S}^h} \Pi[\bm{u^{h*} }].
\end{equation}
In the following, we shall take $\mathcal{S}^h$ to be $\mathcal{M}_{0,Q}^h, \mathcal{V}_0^h, \mathcal{H}_0^h, \mathcal{N}_0^h, \mathcal{G}_{0,Q}^h$, which are the subsets of $\mathcal{M}_Q^h, \mathcal{V}^h, \mathcal{H}^h, \mathcal{N}^h, \mathcal{G}_{Q}^h$ under homogeneous boundary conditions,  respectively. 

\subsection{Error analysis}

We assert the following relations among error bounds for FEM, NN, HiDeNN and CD
\begin{equation}
    \left\| u^{CD}-u^{exact} \right\|_{E} \geq \left\| u^{FEM}-u^{exact} \right\|_{E} \geq \left\| u^{HiDeNN}-u^{exact} \right\|_{E} \geq \left\| u^{NN}-u^{exact} \right\|_{E},
    \label{eq:ErrorAnalysis}
\end{equation}
Here, $\left\| \cdot \right\|_{E}=\sqrt{a(\cdot,\cdot)}$ is called as the energy norm, and $u^{exact}$ is the real solution of the problem, i.e.,
\begin{equation}
    u^{exact}=\arg \min_{u\in H^1_0} \Pi[u].
\end{equation}

Especially when $Q$ is big enough ($Q\geq\min\{n_1,n_2\}$ for 2D and $Q\geq\min\{n_1 n_2,n_2 n_3, n_3 n_1\}$ for 3D), the error bounds for five methods become
\begin{eqnarray}
    &&\left\| u^{CD}-u^{exact} \right\|_{E} \geq \left\| u^{FEM}-u^{exact} \right\|_{E} \geq \left\| u^{HiDeNN-PGD}-u^{exact} \right\|_{E} \\ \nonumber
    &&\geq \left\| u^{HiDeNN}-u^{exact} \right\|_{E} \geq \left\| u^{NN}-u^{exact} \right\|_{E},
    \label{eq:ErrorAnalysisUpdate}
\end{eqnarray}

The relationship (\ref{eq:RelationshipAmongMethods}) is inherited by $\mathcal{M}_{0,Q}^h, \mathcal{V}_0^h, \mathcal{H}_0^h, \mathcal{N}_0^h$, i.e.,
\begin{equation}
    \mathcal{M}_{0,Q}^h \subset \mathcal{V}_0^h \subset \mathcal{H}_0^h \subset \mathcal{N}_0^h.
    \label{eq:RelationshipAmongMethodsBC}
\end{equation}

These four methods are all based on the minimal energy principle,
\begin{equation}
    \bm{u}=\arg \min_{\bm{u}^h \in \mathcal{S}^h} \Pi[\bm{u}^h],
\end{equation}
where $\mathcal{S}^h$ is selected as $\mathcal{M}_{0,Q}^h, \mathcal{V}_0^h, \mathcal{H}_0^h, \mathcal{N}_0^h$ for CD, FEM, HiDeNN, and NN, respectively. Thus due to the relationship (\ref{eq:RelationshipAmongMethodsBC}) among them, we have
\begin{equation}
    \Pi[u^{CD}] \geq \Pi[u^{FEM}] \geq \Pi[u^{HiDeNN}] \geq \Pi[u^{NN}].
\end{equation}
This leads to the error bounds (\ref{eq:ErrorAnalysis}). 
In the same manner, the relationship (\ref{eq:RelationshipAmongMethodsBigQ}) leads to the error bounds (\ref{eq:ErrorAnalysisUpdate}). 
(\ref{eq:ErrorAnalysisUpdate}) shows that HiDeNN-PGD might reach better accuracy than FEM with regular mesh at increasing number of modes.

We remark that the above theoretical analysis does not account for numerical aspects, such as the difficulties to find out the global minimizer of the energy potential.  

\subsection{Proof of the mesh in-dependency of mode reduction error in the PGD method}

The PGD based model reduction induces two kinds of errors: mesh discretization error and mode reduction error. 

\begin{theorem}
Let $u^{exact}, u^{PGD}$,$u^{FEM}$ be the exact solution, the numerical solution of PGD and FEM, respectively. PGD and FEM take the same regular mesh, and FEM takes the shape functions as the product of 1D shape functions of PGD in each dimension. Then the following error decomposition holds:
\begin{equation}\label{eq:ErrorDecomposition}
    \| u^{PGD}-u^{exact} \|_E^2 = \| u^{FEM}-u^{exact} \|_E^2 + \| u^{PGD}-u^{FEM} \|_E^2.
\end{equation}
\end{theorem}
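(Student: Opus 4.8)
The plan is to recognize \eqref{eq:ErrorDecomposition} as a Pythagorean identity in the energy inner product, whose validity rests entirely on the Galerkin orthogonality of the FEM solution together with the inclusion $\mathcal{M}_{0,Q}^h \subset \mathcal{V}_0^h$ recorded in \eqref{eq:RelationshipAmongMethodsBC}.

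First I would set up the variational characterizations. Since $a(\cdot,\cdot)$ is symmetric and bilinear, it defines the inner product underlying $\|\cdot\|_E=\sqrt{a(\cdot,\cdot)}$, and minimizing $\Pi$ over a set is equivalent to the associated best-approximation (Galerkin) problem. In particular $u^{exact}$ obeys $a(u^{exact},v)=(b,v)$ for every admissible $v$, while $u^{FEM}=\argmin_{v^h\in\mathcal{V}_0^h}\Pi[v^h]$ obeys $a(u^{FEM},v^h)=(b,v^h)$ for every $v^h\in\mathcal{V}_0^h$. Subtracting these and using $\mathcal{V}_0^h\subset H^1_0$ yields the Galerkin orthogonality
\[
a\bigl(u^{FEM}-u^{exact},\,v^h\bigr)=0\qquad\forall\,v^h\in\mathcal{V}_0^h.
\]

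Next I would invoke the subspace inclusion. By hypothesis FEM uses the products of the 1D PGD shape functions, so by the rearrangement \eqref{eq:MSExpand} every element of $\mathcal{M}_{0,Q}^h$ is in fact an element of $\mathcal{V}_0^h$; in particular $u^{PGD}\in\mathcal{V}_0^h$. Because $\mathcal{V}_0^h$ is a linear space and $u^{FEM}\in\mathcal{V}_0^h$ as well, the difference satisfies $u^{PGD}-u^{FEM}\in\mathcal{V}_0^h$. Taking $v^h=u^{PGD}-u^{FEM}$ in the orthogonality relation gives
\[
a\bigl(u^{FEM}-u^{exact},\,u^{PGD}-u^{FEM}\bigr)=0.
\]
Finally I would expand $u^{PGD}-u^{exact}=(u^{PGD}-u^{FEM})+(u^{FEM}-u^{exact})$ by bilinearity,
\[
\|u^{PGD}-u^{exact}\|_E^2=\|u^{PGD}-u^{FEM}\|_E^2+2\,a\bigl(u^{PGD}-u^{FEM},\,u^{FEM}-u^{exact}\bigr)+\|u^{FEM}-u^{exact}\|_E^2,
\]
and discard the vanishing middle term to obtain \eqref{eq:ErrorDecomposition}.

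The step I would be most careful about — and the genuine subtlety here — is that the PGD solution set $\mathcal{M}_{0,Q}^h$ is \emph{not} a linear subspace: it is the nonlinear manifold of tensors of rank at most $Q$, so the usual C\'ea/Galerkin argument cannot be run for $u^{PGD}$ directly. The decomposition is rescued by anchoring the orthogonality at $u^{FEM}$, which \emph{does} minimize over the genuine linear space $\mathcal{V}_0^h$; all that is then demanded of $u^{PGD}$ is that it be a point of $\mathcal{V}_0^h$, which the inclusion guarantees irrespective of any optimality of $u^{PGD}$. An instructive by-product of this observation is that the identity holds for \emph{any} admissible iterate lying in $\mathcal{V}_0^h$, not only for the energy-optimal rank-$Q$ approximation, which is why the mode-reduction error $\|u^{PGD}-u^{FEM}\|_E$ cleanly separates from the mesh discretization error $\|u^{FEM}-u^{exact}\|_E$.
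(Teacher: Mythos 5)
Your proposal is correct and follows essentially the same route as the paper: both expand $\|u^{PGD}-u^{exact}\|_E^2$ around $u^{FEM}$ and kill the cross term by observing that $v=u^{PGD}-u^{FEM}$ lies in the FEM test space, so Galerkin orthogonality applies. The only cosmetic difference is that the paper derives this orthogonality by applying the Gauss theorem to the strong form ($\nabla^2 u^{exact}=-b$) before invoking the FEM weak form, whereas you subtract the two weak forms directly --- a slightly cleaner variant that avoids the extra regularity implicitly assumed in the integration by parts, and your remark that $\mathcal{M}^h_{0,Q}$ is not a linear space (so the orthogonality must be anchored at $u^{FEM}$) makes explicit a point the paper leaves implicit.
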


\begin{proof}
We calculate
\begin{eqnarray} \label{eq:PGDDecompExpand}
    && \| u^{PGD}-u^{exact} \|_E^2 \\
    &=&\int_\Omega (\nabla u^{PGD}- \nabla u^{FEM}+\nabla u^{FEM}-\nabla u^{exact})^2 \mathrm{d} \bm{x} \\ \nonumber
    &=&\| u^{FEM}-u^{exact} \|_E^2 + \| u^{PGD}-u^{FEM} \|_E^2+\int_\Omega 2(\nabla u^{PGD}- \nabla u^{FEM})\cdot(\nabla  u^{FEM}-\nabla u^{exact}) \mathrm{d} \bm{x}.
\end{eqnarray}
By the Gauss theorem, we obtain
\begin{eqnarray} \label{eq:PGDDecompCross}
    &&\int_\Omega (\nabla u^{PGD}- \nabla u^{FEM})\cdot(\nabla u^{FEM}-\nabla u^{exact}) \mathrm{d} \bm{x} \\ \nonumber
    &=&\int_\Omega (\nabla u^{PGD}- \nabla u^{FEM})\cdot \nabla u^{FEM}+(u^{PGD}-u^{FEM})\nabla^2 u^{exact} \mathrm{d} \bm{x} \\ \nonumber
    &=&\int_\Omega (\nabla u^{PGD}- \nabla u^{FEM})\cdot \nabla u^{FEM}-(u^{PGD}-u^{FEM}) b(\bm{x}) \mathrm{d} \bm{x}.
\end{eqnarray}
Since PGD and FEM share the same mesh and shape functions, $v= u^{PGD}- u^{FEM}$ belongs to the test function space of FEM. By the weak form of the FEM problem, we have
\begin{equation}
    \int_\Omega \nabla v \cdot \nabla u^{FEM}-v(\bm{x})b(\bm{x}) \mathrm{d} \bm{x} = 0.
\end{equation}
That is to say, (\ref{eq:PGDDecompCross}) vanishes and hence (\ref{eq:ErrorDecomposition}) holds.
$\hfill\square$.
\end{proof}

This theorem asserts that the PGD error is a direct sum of the FEM error and the mode reduction error (the difference between FEM and PGD).

\section{The formulation of HiDeNN-PGD: the 2D Poisson problem as illustration}
In subsection \ref{ssec:HiDeNN-PGD}, we defined HiDeNN-PGD in terms of the approximation function space. Here, we give the detailed formulation of the method.

For the sake of simplicity and without loss of generality, we consider 2D Poisson problem,
\begin{equation}
\left\{\begin{array}{l}
    \Delta u(x,y)+b(x,y)=0 \text{ in } \Omega_{(x,y)} \subset \mathbb{R}^2, \\
    u|_{\partial \Omega}=0,
\end{array}\right.
\label{eq:PoissonEqHP}
\end{equation}
(\ref{eq:PoissonEqHP}) is solved in the regular domain $\Omega_{(x,y)}=[a,b]\times[c,d]$ with homogeneous boundary conditions. Note that if inhomogeneous boundary conditions are under consideration, we can separate the solution into two parts,
\begin{equation}
    u=u^0+\tilde{u},
\end{equation}
where $u^0$ is an arbitrary function satisfying boundary conditions, and $\tilde{u}$ is the solution to the new equations with homogeneous boundary condition.
The variational formula of (\ref{eq:PoissonEqHP}) is 
\begin{equation}
    \Pi(u)=\dfrac{1}{2}\int_{\Omega_{(x,y)}} \left|\nabla u\right|^2 \mathrm{d}x\mathrm{d}y - \int_{\Omega_{(x,y)}} u(x,y)b(x,y) \mathrm{d}x\mathrm{d}y
    \label{eq:PoissonVar}
\end{equation}

Substituting HiDeNN-PGD interpolation function into (\ref{eq:PoissonVar}), we obtain
\begin{eqnarray}
    \label{VariationalHP}
    \Pi(u^h)&=&\dfrac{1}{2}\sum_{p=1}^Q\sum_{q=1}^Q \left(\int_{x_1}^{x_{n_1}} \dfrac{d}{dx}X^{(p)}(x)\dfrac{d}{dx}X^{(q)}(x) \mathrm{d}x\right) \left(\int_{y_1}^{y_{n_2}} Y^{(p)}(y)Y^{(q)}(y) \mathrm{d}y\right) \\ \nonumber
    &+&\dfrac{1}{2}\sum_{p=1}^Q\sum_{q=1}^Q \left(\int_{x_1}^{x_{n_1}} X^{(p)}(x)X^{(q)}(x) \mathrm{d}x\right) \left(\int_{y_1}^{y_{n_2}} \dfrac{d}{dy}Y^{(p)}(y) \dfrac{d}{dy}Y^{(q)}(y) \mathrm{d}y\right) \\ \nonumber
    &-&\sum_{q=1}^Q\int_{\Omega_{(x,y)}} X^{(q)}(x)Y^{(q)}(y) b(x,y)\mathrm{d}x\mathrm{d}y,
\end{eqnarray}
with the discrete mesh $[x_1=a, x_2, \cdots, x_{n_1}=b]\times[y_1=c,y_2,\cdots,y_{n_2}=d]$.
Notice that there exist cross terms in Eq. (\ref{VariationalHP}). For convenience and considering the difficulties to do exact integration between different discrete meshes, all the modes share the same mesh $[x_1,x_2,\cdots,x_{n_1}]\times[y_1,y_2,\cdots,y_{n_2}]$ and the same shape functions. We use Gauss quadrature for the source term.

Once the interpolation function set (\ref{eq:HiDeNNPGD_FunSet}) is obtained, minimal variational principle gives the approximated solution. The process in HiDeNN-PGD is formulated as
\begin{eqnarray}
    \text{find} && \beta_I^{(1)}, \gamma_J^{(1)},\cdots,\beta_I^{(Q)}, \gamma_J^{(Q)}, x_I, y_J,I=1,\cdots,n_1, J=1,\cdots,n_2\\ \nonumber
    \text{min} && \dfrac{1}{2}\int_{\Omega_{(x,y)}} \left|\nabla u\right|^2 \mathrm{d}x\mathrm{d}y - \int_{\Omega_{(x,y)}} u(x,y)b(x,y) \mathrm{d}x\mathrm{d}y \\ \nonumber
    && u^h=\sum_{q=1}^Q \left( \sum_{I=1}^{n_1} \mathcal{N}_I(x;\bm{x}_I^*,\bm{\mathcal{A}})\beta_I^{(q)} \right) \left( \sum_{J=1}^{n_2} \mathcal{N}_J(y;\bm{y}_J^*,\bm{\mathcal{A}})\gamma_J^{(q)} \right) \\ \nonumber
    \text{and} && \sum^{n_1}_{I=1}\mathcal{N}(\bm{x}^{*}_{I},x,\mathcal{A}) = 1, \sum^{n_2}_{J=1}\mathcal{N}(\bm{y}^{*}_{J},y,\mathcal{A}) = 1.
\end{eqnarray}
The gradient descent method is applied to iteratively minimize $\Pi(\mathscr{u}^h)$ and solve for all parameters together. In the following numerical examples, we choose Adam algorithm \cite{kingma2014adam}, i.e.,
\begin{framed}
	\begin{enumerate}
		\item Initialization: Set number of modes $Q$, initial nodal positions $x_I,y_J,I=1,2,\cdots,n_1,J=1,2,\cdots,n_2$, initial coefficients $\beta^{(q)}_I,\gamma^{(q)}_J, q=1,2,\cdots,Q$ and maximal iteration step $M$
		\item Algorithm: 
		
		While $k\leq M$ do
		\begin{enumerate}
		    \item Compute gradient $\dfrac{\partial}{\partial x_I}, \dfrac{\partial}{\partial y_J}, \dfrac{\partial}{\partial \beta_I^{(q)}},
		    \dfrac{\partial}{\partial \gamma_J^{(q)}}, I=1,\cdots,n_1, J=1,\cdots,n_2, q=1,\cdots,Q$;
		    \item Update $x_I,y_J,\beta_I^{(q)},\gamma_J^{(q)},I=1,\cdots,n_1, J=1,\cdots,n_2, q=1,\cdots,Q$ by using Adam algorithm;
		\end{enumerate}
		End while.
	\end{enumerate}
\end{framed}

\section{Numerical examples}
In this section, we study the performance of HiDeNN-PGD with comparison to FEM, HiDeNN and PGD methods. This study mainly focuses on the accuracy comparison and the convergence behavior, as the computational cost can be strongly affected by the under-optimized implementation. The computational efficiency of the proposed HiDeNN-PGD method will be investigated in our future work on the basis of GPU-computing.  


\subsection{2D case }
The HiDeNN-PGD is applied to the Poisson problem with a concentrated load. To demonstrate the capability of the method, the domain analyzed by HiDeNN-PGD is initialized with a uniform mesh of 40 by 40 elements. As shown in \figurename~\ref{fig:HiDPGDtoFEM}, the final solution agrees with the reference FE solution. This reference solution is obtained with a very fine mesh containing $4,000\times 4,000$ elements. 

\tablename~\ref{table:error} illustrates the evolution of accuracy with an increasing number of modes. For comparison purposes, we also applied the PGD, CD, FEM and HiDeNN  for the same problem on the uniform mesh of 40 by 40. The errors of different methods are computed based on the energy norm with comparison to the reference solution. As expected,  HiDeNN is the most accurate one but leads to a significantly larger number of degrees of freedom (DoFs). The proposed HiDeNN-PGD method can have the same level of accuracy and only requires a small number of modes. Compared to PGD and CD, the HiDeNN-PGD is much more accurate at a limited number of modes. Taking a closer look at PGD and CD, these two methods converge overall to the FEM on the coarse mesh. HiDeNN-PGD and HiDeNN can overcome this limitation imposed by the mesh size with the adaptivity. This observation is consistent with our theoretical analysis.  Moreover, it should be noticed that, unlike HiDeNN, the HiDeNN-PGD only increases slightly the DoFs when compared with PGD and CD. This attributes to the separation of variables. Indeed, the mesh adaptation is performed only in the two separated axes, as shown in \figurename~\ref{fig:HiDPGDtoFEM}(d). For comparison purposes, the final optimized mesh of HiDeNN is illustrated in \figurename~\ref{fig:MeshForHPandHiDeNN}. It is shown that the HiDeNN enables a full adaptivity for the entire mesh, which leads to a significantly different and more accurate final result. Nevertheless, the HiDeNN-PGD shows attractive advantages in terms of DoFs.

\figurename~\ref{fig:mode} illustrates the first four modes of HiDeNN-PGD, PGD, and CD. The PGD modes remain similar to CD in this example. However, the modes of HiDeNN-PGD seem to be more concentrated in the region of interest. This difference mainly comes from the mesh adaptivity.

To further confirm the performance of HiDeNN-PGD, we have compared the accuracy of different methods on different meshes. In \tablename~\ref{table:errormesh}, the PGD, CD and HiDeNN-PGD results are obtained from the final converged mode. It is shown that the HiDeNN-PGD always gives more accurate results with fewer degrees of freedom. This confirms the previous observation.

\begin{figure}[htbp]
\centering
\subfigure[Reference FE solution obtained with a extremely fine mesh ]{\includegraphics[scale=0.9]{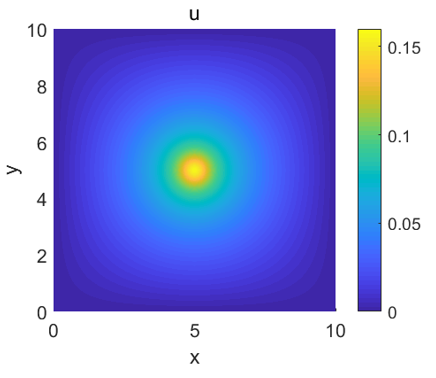}}\quad \quad
\subfigure[HiDeNN-PGD solution]{\includegraphics[scale=0.9]{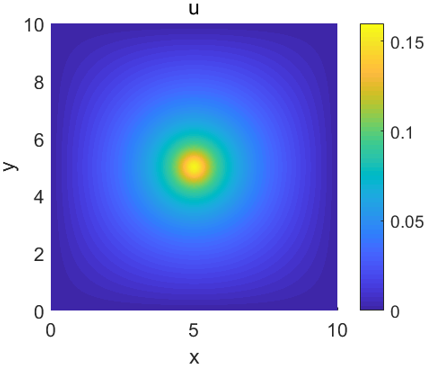}}
\subfigure[Initial uniform mesh for HiDeNN-PGD ]{\includegraphics[scale=1]{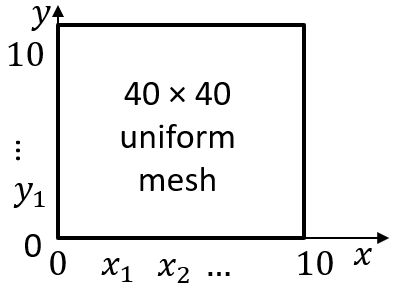}}
\subfigure[Final optimized mesh for HiDeNN-PGD]{\includegraphics[scale=0.9]{figures/HiDeNNPGDmesh.PNG}}
\caption{FE solution versus HiDeNN-PGD solution.}
\label{fig:HiDPGDtoFEM}
\end{figure}

\begin{figure}[htbp]
\centering
\subfigure[Inital mesh for HiDeNN ]{\includegraphics[scale=0.5]{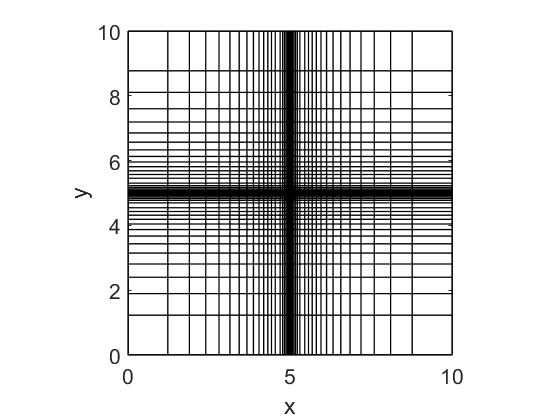}}
\subfigure[Final optimized mesh for HiDeNN]{\includegraphics[scale=0.5]{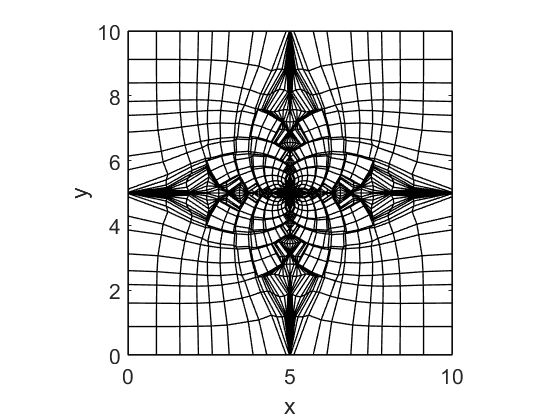}}
\caption{Mesh optimization in HiDeNN.}
\label{fig:MeshForHPandHiDeNN}
\end{figure}

\begin{figure}[htbp]
\centering
\subfigure[First four modes in HiDeNN-PGD ]{\includegraphics[scale=0.7]{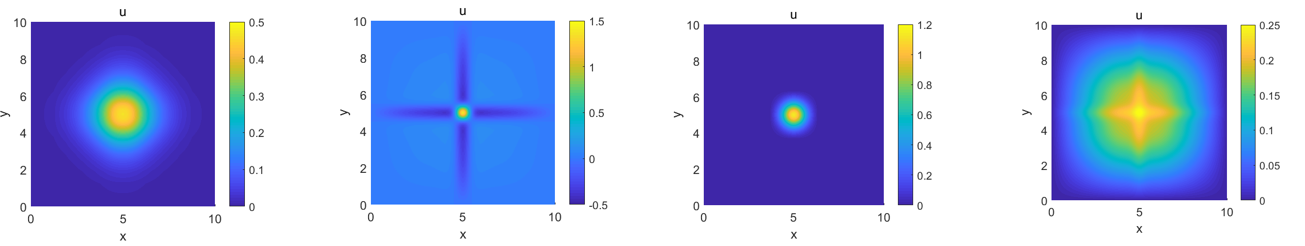}}
\subfigure[First four modes in PGD]{\includegraphics[scale=0.7]{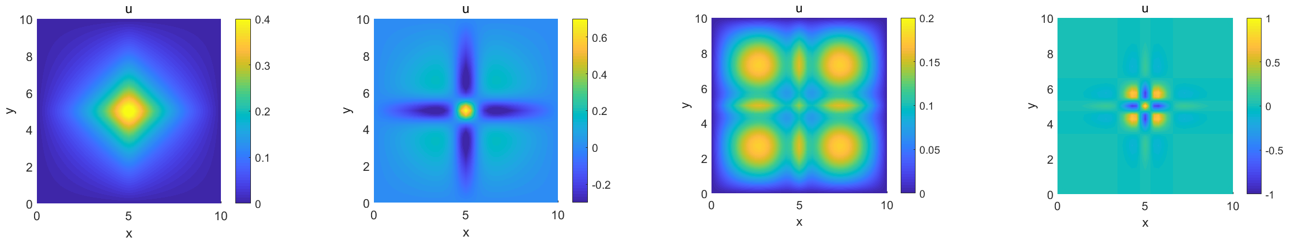}}
\subfigure[First four modes in CD ]{\includegraphics[scale=0.7]{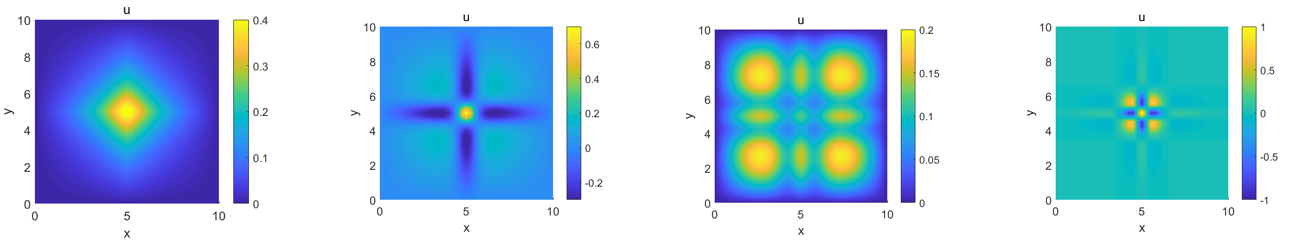}}
\caption{Mode comparison for HiDeNN-PGD, PGD and CD}
\label{fig:mode}
\end{figure}

\begin{table}[!htb]
\caption{Accuracy comparison for different methods on the coarse 40 by 40 mesh}
\centering
\begin{tabular}{|c | c c | c c | c c | c c | c c|}
\hline
 & \multicolumn{2}{c}{PGD}& \multicolumn{2}{|c}{CD} &\multicolumn{2}{|c}{FEM}&\multicolumn{2}{|c}{HiDeNN-PGD}&\multicolumn{2}{|c|}{HiDeNN} \\ \hline
Mode number & DoFs& Err & DoFs& Err & DoFs& Err& DoFs& Err & DoFs& Err \\ \hline
1 & 78& $38.167\%$ & 78& $38.167\%$ & \textbf{1521}& $\bm{11.659\%}$& 156& $37.357\%$ & \textbf{4719}& $\bm{2.102\%}$ \\ \hline
2 & 156& $16.500\%$ & 156& $14.422\%$ & &- & \textbf{234}& $\bm{9.293\%}$ & & - \\ \hline
3 & 234& $13.188\%$ & 234& $11.789\%$ & &- & 312& $3.674\%$ & & - \\ \hline
4 & 312& $11.811\%$ &312& $11.664\%$ & &- & 390& $3.662\%$ & & - \\ \hline
5 & 390& $11.685\%$ &\textbf{390}& $\bm{11.659\%}$ & &- &468 & $3.661\%$ & & - \\ \hline
6 & 468& $11.666\%$ &468& $11.659\%$ & &- &546& $3.661\%$ & & - \\ \hline
8 & \textbf{624}& $\bm{11.659\%}$ & & - & &- && - & & - \\ \hline
20 & 1560& $11.659\%$ & & - & &- && - & & - \\ \hline
\end{tabular}
\label{table:error}
\end{table}

\begin{table}[!htb]
\caption{Accuracy comparison for different methods with different meshes }
\centering
\begin{tabular}{|c | c c | c c | c c | c c | c c|}
\hline
 & \multicolumn{2}{c}{PGD}& \multicolumn{2}{|c}{CD} &\multicolumn{2}{|c}{FEM}&\multicolumn{2}{|c}{HiDeNN-PGD}&\multicolumn{2}{|c|}{HiDeNN} \\ \hline
Mesh & DoFs& Err & DoFs& Err & DoFs& Err& DoFs& Err & DoFs& Err \\ \hline
$40\times40$ & 624(8)
& $11.659\%$ & 390(5)
& $11.659%
\%$ & \textbf{1,521}
& $\bm{11.659\%}$& \textbf{468}(5)
& $\bm{3.661\%}$ & \textbf{4719}& $\bm{2.102\%}$ \\ \hline
$80\times80$ & 1,422(9)
& $5.887\%$ & 790(5)
& $5.887\%$ &6,241
 &$5.887\%$& 1,106(6)
& $1.851\%$ & 19,039
 & $1.406\%$\\ \hline
$160\times160$ & 2,862(9)
& $2.948\%$ & 1,908(6)
& $2.948\%$ &25,281
 &$2.948\%$ & 2,226(6)
& $1.174\%$ &76,479
 & $1.081\%$  \\ \hline
$320\times320$ & 7,018(11)
& $1.469\%$ &5,104(8)
& $1.469\%$ &101,761
 &$1.469\%$ & 3,828(5)
& $0.896\%$ &306,559
 & $0.889\%$\\ \hline
 $640\times640$ & 11,502(9)
& $0.724\%$ &7,668(6)
& $0.724\%$ &408,321
 &$0.724\%$ & 10,224(7)
& $0.606\%$ &1,227,519
 & $0.597\%$ \\ \hline
\end{tabular}
\label{table:errormesh}
Note that the number in the parentheses indicates the number of modes. 
\end{table}

\subsubsection{Convergence studies}
In the HiDeNN-PGD method, the mode number has to be prescribed. In general, this is unknown for a given problem and can vary significantly from one to another. Thus, we want to study the convergence property of this method and the PGD to get a general idea about how to choose the mode number. 

As a first attempt, we restrict ourselves to a one-mode solution problem. This eliminates the effect of the number of modes, and allows us to study the convergence rate of the methods with respect to mesh refinement. This kind of error is usually known as discretization error in FEM. To do so, the body force term is manufactured so that the final solution is analytically known in a separated form. As shown in \figurename~\ref{fig:convdof}, the PGD and HiDeNN-PGD results converge with respect to the element size at a rate similar to FEM. However, if we consider the DoFs, this error converges much faster for PGD and HiDeNN-PGD. This confirms that doing separation of variables for PGD and HiDeNN-PGD does not degrade the convergence rate in terms of mesh refinement.

The PGD based model reduction induces two kinds of errors: mesh discretization error and mode reduction error. In particular, we have theoretically shown that the latter one should be independent of the mesh. In order to show this numerically, we use a manufactured load to compute the multi-modes PGD solution in different meshes. The results are shown in \figurename~\ref{fig:convPGD2D}. As expected, the convergence rates on the number of modes remain similar regardless of the mesh size. It seems that the log error is linearly proportional to the mode number.  The decreasing slop remained unchanged from the very coarse mesh to the fine one. This implies the coarse mesh has the same mode reduction error as a fine mesh. Hence, it can be used to choose the mode number.

From the above observation, we may consider using a coarse mesh PGD, which is very cheap, to study the mode reduction error and choose an appropriate mode number for HiDeNN-PGD. Since the the HiDeNN-PGD is always more accurate than the usual PGD, the selected prescribed number should be large enough.

\begin{figure}[htbp]
\centering
\subfigure[Convergence with the element size ]{\includegraphics[width=3in]{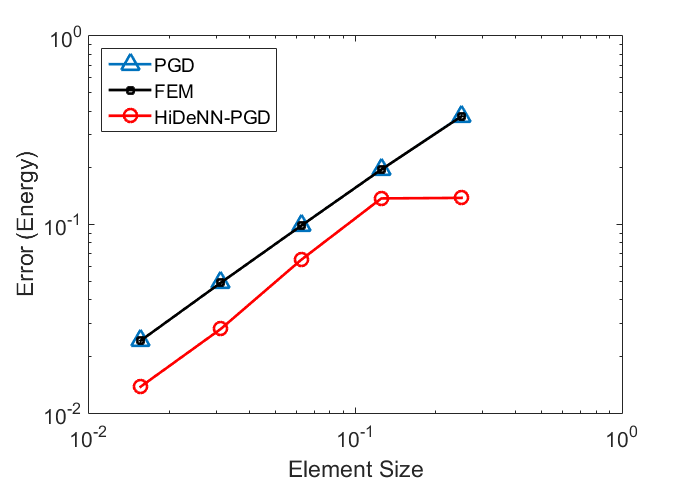}}
\subfigure[Convergence with DoFs]{\includegraphics[width=3in]{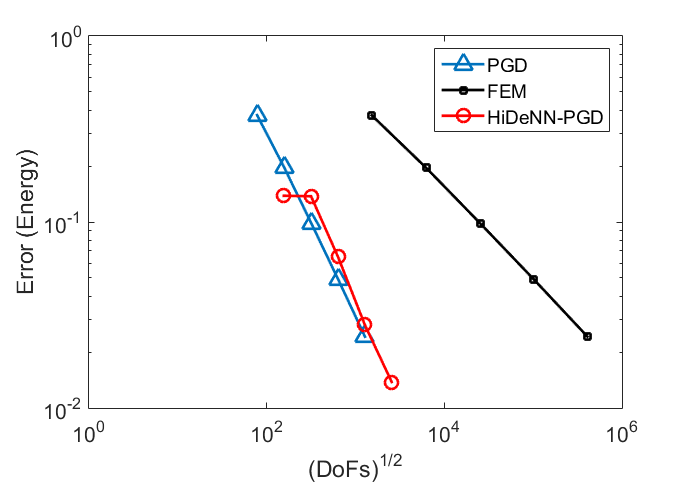}}
\caption{Convergence rate of FEM, PGE, HiDeNN-PGD with respect to mesh refinement}
\label{fig:convdof}
\end{figure}

\begin{figure}[htbp]
\centering
{\includegraphics[scale=0.9]{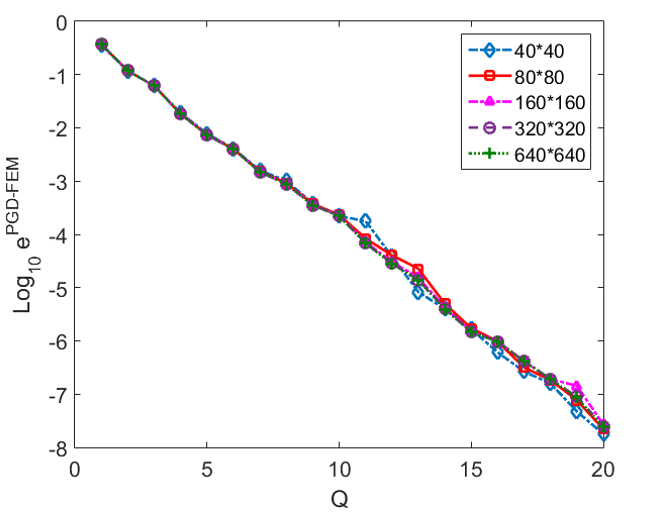}}
\caption{Convergence of PGD with respect to the increasing number of modes for different meshes}
\label{fig:convPGD2D}
\end{figure}

\subsection{3D case }
The proposed HiDeNN-PGD method has been tested in three dimensional cases. Similar to the previous two-dimensional example, \tablename~\ref{table:3Derror} reports the evolution of error at an increasing mode number for different methods on a coarse mesh. Again, the HiDeNN-PGD outperforms the other methods in terms of accuracy and DoFs. The same conclusion can be drawn on finer meshes, as reported in \tablename~\ref{table:3Derrormesh}.

\begin{table}[!htb]
\caption{Accuracy comparison for different methods on the coarse $40\times40\times40$ mesh}
\centering
\begin{tabular}{|c | c c | c c | c c | c c | c c |}
\hline
 & \multicolumn{2}{c}{PGD}& \multicolumn{2}{|c}{CD} &\multicolumn{2}{|c}{FEM}&\multicolumn{2}{|c|}{HiDeNN-PGD}&\multicolumn{2}{|c|}{HiDeNN} \\ \hline
Mode number & DoFs& Err & DoFs& Err & DoFs& Err& DoFs& Err& DoFs& Err  \\ \hline
1 & 117& $72.474\%$ & 117& $72.474\%$ & \textbf{59,319}& $\bm{27.870\%}$ & 234& $72.349\%$ & \textbf{255,996}
& $\bm{10.780\%}$  \\ \hline
2 & 234& $29.883\%$ & 234& $27.927\%$ & &- & \textbf{351}& $\bm{10.797\%}$ & &-\\ \hline
3 & 351& $28.026\%$ & 351& $27.923\%$ & &- & 468& $10.797\%$ & &- \\ \hline
4 & 468& $27.931\%$ &468& $27.920\%$ & &- & 585& $10.797\%$ & &- \\ \hline
5 & 585& $27.895\%$ &585& $27.872\%$ & &- & 702 & $10.796\%$ & &- \\ \hline
6 & 702& $27.881\%$ &702& $27.871\%$ & &- & - & -& &-\\ \hline
7 & 819& $27.877\%$ &819& $27.871\%$ & &- & - & -& &-\\ \hline
8 & 936& $27.874\%$ &\textbf{936}& $\bm{27.870\%}$ & &- & - & -& &-\\ \hline
17 & \textbf{1,989}& $\bm{27.870\%}$ & & - & &- &  & -& &-\\ \hline
20 & 2,340& $27.870\%$ & & - & &- & - & -& &-\\ \hline
\end{tabular}
\label{table:3Derror}
\end{table}

\begin{table}[!htb]
\caption{Accuracy comparison for different methods with different meshes for 3D problem }
\centering
\begin{tabular}{|c | c c | c c | c c | c c | c c | }
\hline
 & \multicolumn{2}{c}{PGD}& \multicolumn{2}{|c}{CD} &\multicolumn{2}{|c}{FEM}&\multicolumn{2}{|c|}{HiDeNN-PGD}&\multicolumn{2}{|c|}{HiDeNN} \\ \hline
Mesh & DoFs& Err & DoFs& Err & DoFs& Err& DoFs& Err & DoFs& Err \\ \hline
$40\times40\times40$ & 1,989(17)
& $27.870\%$ & 936(8)
& $27.870%
\%$ & \textbf{59,319}
& $\bm{27.870\%}$& \textbf{702}(5)
& $\bm{10.796\%}$ & \textbf{255,996}
& $\bm{10.780\%}$ \\ \hline
$80\times80\times80$ & 2,607(11)
& $14.416\%$ & 1185(5)
& $14.416\%$ &493,039
 &$14.416\%$& 1185(4)
& $6.771\%$ & 2,047,996
& \\ \hline
$160\times160\times160$ & 4,770(10)
& $7.247\%$ & 2,385(5)
& $7.247\%$ &4,019,679
 &$7.247\%$ & 3,339(6)
& $4.036\%$ & 16,383,996 &  \\ \hline
$320\times320\times320$ & 9,570(10)
& $3.628\%$ &5,742(6)
& $3.628\%$ &32,461,759
 &$3.628\%$ & 5,742(5)
& $1.831\%$ & 131,071,996
& \\ \hline
\end{tabular}
\label{table:3Derrormesh}
\end{table}


\section{Conclusion}
A reduced-order hierarchical deep learning network has been proposed. The so-called HiDeNN-PGD is a combination of HiDeNN and PGD with separated spatial variables. This combined method presents several advantages over HiDeNN and PGD methods. First, it allows to leverage the automatic mesh adaptivity of the HiDeNN method for reducing the mode number in PGD approximation. Second, combining PGD with HiDeNN reduces significantly the number of degrees of freedom for HiDeNN and potentially leads to a high computational efficiency.  Furthermore, we have demonstrated that both HiDeNN and HiDeNN-PGD can provide more accurate solutions than FEM and PGD (or CD), through an error analysis with the help of analyzing the approximation function spaces. 

The numerical results have confirmed the mathematical analysis. These examples have been performed based on 2D and 3D Poisson problems. It is shown that the proposed HiDeNN-PGD method can provide accurate solutions with the least degrees of freedom. In order to have an idea for the prescribed number of modes in HiDeNN-PGD, we have studied numerically the convergence rate on PGD approximation. It has been found that the convergence rate on the mode number is insensitive to the mesh size. Therefore, we can expect to use a coarse mesh PGD to compute a roughly estimated mode number for HiDeNN-PGD. This finding is interesting and provides a useful guideline on the choice of the number of modes for HiDeNN-PGD or other PGD-based methods that may require a better optimality in terms of basis.

In the future,  the computational efficiency of HiDeNN-PGD will be further explored based on  realistic problems, such as thermo-mechanical analysis in additive manufacturing or multi-scale composite simulations. In terms of convergence studies, theoretical results need to be derived through a rigorous mathematical analysis. The numerical results provided in this paper can serve as the first evidence for demonstrating the capabilities of the method. 

\section*{Acknowledgement}
L. Zhang, and S. Tang are supported by National Natural Science Foundation of China grant number 11890681, 11832001, 11521202 and 11988102.
W. K. Liu and Y. Lu are supported by National Science Foundation grant number CMMI-1934367 and CMMI-1762035.

\appendix

\section{Convergence for general canonical decomposition method at increasing number of modes}

In this section, we discuss the convergence of the canonical decomposition method at increasing $Q$ (number of modes). In Sect. 2, we have shown that $\mathcal{M}_Q^h \subset \mathcal{V}^h$, provided that their interpolations are based on the same basis functions. Here, we make some further discussions.

For 2D case, we compare 
\begin{equation}
    \mathcal{M}_Q^h=\left\{u^h \bigg |u^h=\sum_{q=1}^Q \left( \sum_{I=1}^{n_1} N_I(x)\beta_I^{(q)} \right) \left( \sum_{J=1}^{n_2} N_J(y)\gamma_J^{(q)} \right), \beta_I^{(q)}, \gamma_J^{(q)} \in \mathbb{R}, I=1,\cdots,n_1, J=1,\cdots,n_2\right\}
    \label{eq:2DMS}
\end{equation}
with
\begin{equation}
    \mathcal{V}^h=\left\{ u^h \bigg |u^h=\sum_{I=1}^{n_1} \sum_{J=1}^{n_2} N_I(x)N_J(y)u_{(I,J)}, u_{(I,J)}\in\mathbb{R}, I=1,\cdots,n_1, J=1,\cdots,n_2 \right\}.
    \label{eq:2DFEMVh}
\end{equation}
As the same basis functions are used, it follows that:
\begin{description}
	\item[1.] $\forall Q\in\mathbb{N}, \mathcal{M}_Q^h \subset \mathcal{V}^h$;
	\item[2.] If $Q_1 \le Q_2$, $Q_1,Q_2 \in \mathbb{N}$, $\mathcal{M}_{Q_1}^h \subset \mathcal{M}_{Q_2}^h$;
	\item[3.] $\forall Q\geq \min\{n_1,n_2\}, Q\in\mathbb{N},  \mathcal{M}_Q^h = \mathcal{V}^h$.
\end{description}
The first two statements are straightforward. The last property is proved in Appendix A.

We then conclude the following relationship:
\begin{equation}
    \mathcal{M}^h_{Q=1} \subset \mathcal{M}^h_{Q=2} \subset \cdots \subset \mathcal{M}^h_{Q=\min\{n_1,n_2\}} = \mathcal{M}^h_{Q=\min\{n_1,n_2\}+1} = \cdots = \mathcal{V}^h.
\end{equation}
In other words, $\mathcal{M}^h_{Q}$ is always a subset of $\mathcal{V}^h$, and it becomes the same as $\mathcal{V}^h$ when $Q$ increases. Consequently, when enough number of modes are taken, the canonical decomposition result reaches the same accuracy as the FEM solution. We remark that $\min\{n_1,n_2\}$ is precisely the minimal number of modes to ensure $\mathcal{M}^h_{Q}=\mathcal{V}^h$.

The above discussions extend to 3D readily. 
\begin{description}
	\item[1.] $\forall Q\in\mathbb{N}, \mathcal{M}_Q^h \subset \mathcal{V}^h$;
	\item[2.] If $Q_1 \le Q_2, Q_1,Q_2 \in \mathbb{N}, \mathcal{M}_{Q_1}^h \subset \mathcal{M}_{Q_2}^h$;
	\item[3.] $\forall Q\geq \min\{n_1n_2,n_2n_3, n_1n_3\}, Q\in\mathbb{N},  \mathcal{M}_Q^h = \mathcal{V}^h$.
\end{description}

We note that the minimal number of modes to ensure $\mathcal{M}_Q^h = \mathcal{V}^h$ in 3D is essentially to find a best rank-$r$ approximation to order-3 tensor, which is an open mathematical problem. An upper bound is given in property 3.

\section{Convergence for 2D canonical decomposition method at increasing number of modes}
\label{Appdix:2DConverge}

\begin{theorem}
For 2D case, $\mathcal{M}_Q^h$ and $\mathcal{V}^h$ are defined in Eq.(\ref{eq:2DMS}) and Eq.(\ref{eq:2DFEMVh}), respectively. We have
\begin{equation}
    \forall Q \geq \min(n_1,n_2), Q \in \mathbb{N}, \mathcal{M}_Q^h=\mathcal{V}^h.
    \label{eq:2DMconvergeToV}
\end{equation}
\end{theorem}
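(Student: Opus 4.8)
The plan is to translate the set equality into an equivalent statement about factorizations of coefficient matrices and then invoke elementary rank theory. Since the inclusion $\mathcal{M}_Q^h \subset \mathcal{V}^h$ holds for every $Q$ (already established, property 1), it suffices to prove the reverse inclusion $\mathcal{V}^h \subset \mathcal{M}_Q^h$ under the hypothesis $Q \geq \min(n_1,n_2)$.

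First I would fix an arbitrary $u^h \in \mathcal{V}^h$, written as $u^h = \sum_{I=1}^{n_1}\sum_{J=1}^{n_2} N_I(x) N_J(y) u_{(I,J)}$, and collect its coefficients into the matrix $U = (u_{(I,J)}) \in \mathbb{R}^{n_1 \times n_2}$. Because the tensor-product shape functions $\{N_I(x) N_J(y)\}$ are linearly independent, the assignment $u^h \mapsto U$ is a bijection between $\mathcal{V}^h$ and $\mathbb{R}^{n_1 \times n_2}$. Comparing with the expanded form (\ref{eq:MSExpand}), a function lies in $\mathcal{M}_Q^h$ precisely when its coefficient matrix admits a factorization $u_{(I,J)} = \sum_{q=1}^Q \beta_I^{(q)} \gamma_J^{(q)}$, that is, when $U = \sum_{q=1}^Q \bm{\beta}^{(q)} (\bm{\gamma}^{(q)})^{\mathsf{T}}$ is a sum of $Q$ rank-one matrices. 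Hence $\mathcal{M}_Q^h$ corresponds exactly to the set of matrices of rank at most $Q$.

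The key step is then purely linear-algebraic: every matrix $U \in \mathbb{R}^{n_1 \times n_2}$ satisfies $\mathrm{rank}(U) \leq \min(n_1,n_2)$, and any matrix of rank $r$ can be written as a sum of $r$ rank-one matrices (e.g. via the singular value decomposition, or simply by expanding in a basis of its column space). Therefore, whenever $Q \geq \min(n_1,n_2) \geq \mathrm{rank}(U)$, the matrix $U$ admits the desired decomposition into $Q$ rank-one terms (padding with zero terms if $\mathrm{rank}(U) < Q$), so the corresponding $u^h$ lies in $\mathcal{M}_Q^h$. Since $u^h$ was arbitrary, this gives $\mathcal{V}^h \subset \mathcal{M}_Q^h$, and combined with the reverse inclusion yields $\mathcal{M}_Q^h = \mathcal{V}^h$.

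I expect the only delicate point to be the linear independence of the product basis $\{N_I(x) N_J(y)\}$, which underlies the bijection with $\mathbb{R}^{n_1 \times n_2}$; once this is granted, the rank argument is routine. It is worth noting that the bound $\min(n_1,n_2)$ is sharp, since a matrix of full rank $\min(n_1,n_2)$ cannot be represented with fewer rank-one terms, which explains why $\min(n_1,n_2)$ is the minimal number of modes guaranteeing the equality.
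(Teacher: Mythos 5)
Your proposal is correct and follows essentially the same route as the paper's own proof: identify a function in $\mathcal{V}^h$ with its coefficient matrix $\bm{U}$, observe that membership in $\mathcal{M}_Q^h$ amounts to a sum of $Q$ rank-one terms, and use the rank bound $\mathrm{rank}(\bm{U})\le\min(n_1,n_2)$ together with an SVD-type rank-one decomposition (padding with zero modes as needed). Your added remarks on the linear independence of the product basis and on the sharpness of $\min(n_1,n_2)$ are consistent with the paper's closing remark.
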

\begin{proof}
For any interpolation function
\begin{equation}
    u^{h,FEM}=\sum_{i=1}^{n_1}\sum_{j=1}^{n_2} N_i(x)N_j(y) u_{i,j}
\end{equation}
in the set $\mathcal{V}^h$, we write the nodal values in the form of matrix,
\begin{equation}
    \bm{U}=\left[
    \begin{array}{cccc}
        u_{1,1} & u_{1,2} & \cdots & u_{1,n_2} \\
        u_{2,1} & u_{2,2} & \cdots & u_{2,n_2} \\
        \vdots  & \vdots  &        & \vdots \\
        u_{n_1,1} & u_{n_1,2} & \cdots & u_{n_1,n_2}
    \end{array}\right].
\end{equation}
According to the singular value decomposition (SVD), $\bm{U}$ is represented by
\begin{equation}
    \bm{U}=\sum_{q=1}^{rank(\bm{U})} \sigma^{(q)} \bm{w}^{(q)} \otimes \bm{v}^{(q)}, \sigma^{(1)}\geq\sigma^{(2)}\geq\cdots\geq\sigma^{(rank(\bm{U}))}>0,
\end{equation}
where $\bm{w}^{(q)}$ is the $n_1$-dimensional vector, and $\bm{v}^{(q)}$ is the $n_2$-dimensional vector. Thus $u^{h,FEM}$ is rewritten in the form of the separation of variables, i.e.,
\begin{equation}
    u^{h,FEM}=\sum_{i=1}^{n_1} \sum_{j=1}^{n_2} N_i(x) N_j(y) \left(\sum_{q=1}^{rank(\bm{U})}\sigma^{(q)} w_i^{(q)} v_j^{(q)} \right).
\end{equation}
So we have $u^{h,FEM}\in\mathcal{M}_Q^h$, if $Q\geq\min\{n_1,n_2\}\geq rank(\bm{U})$. Combining  with $\mathcal{M}_Q^h\subset\mathcal{V}^h$, we obtain Eq. (\ref{eq:2DMconvergeToV}).
\end{proof}

We remark that SVD tells us the minimal number of modes to reproduce the FE solution, i.e. $\min\{n_1,n_2\}$.

\section{1D HiDeNN Formulation}
\label{sec:1DHiDeNN}

In standard 1D FEM, the computational domain $\Omega$ is discretized by a grid with $n$ nodes and the shape function associated with an internal node $x_I$ is
\begin{equation}
N_I(x)=\left\{
\begin{array}{cc}
\dfrac{x-x_{I-1}}{x_I-x_{I-1}},     & x_{I-1}\leq x \leq x_I, \\
\dfrac{x_{I+1}-x}{x_{I+1}-x_{I}},     & x_I \leq x \leq x_{I+1}, \\
0,   & else where,
\end{array}
\right.
\label{eq:linearshapefunction}
\end{equation}
where $x_{I-1}$ and $x_{I+1}$ are the two neighbor points of the node $\:x_{I}$ from the left side and right side, respectively.

We rewrite $N_I(x)$ in a DNN format consists of weights, biases, and activation functions. Considering the shape function is a piecewise linear function, the activation function is selected as ReLU function, i.e., $\mathcal{A}_{1}=\max(0,x)$. Fig. \ref{fig:1D_DNN_representation}(a) shows the DNN representation of the linear shape function. The corresponding formula is 
\begin{eqnarray}
\mathscr{N}_{I}(x;\bm{W}, \bm{b},\:\bm{\mathcal{A}})
&=& W_{11}^{l=4}\mathcal{A}_{1}\left( W_{11}^{l=3} \mathcal{A}_{1} \left( W_{11}^{l=2}x+b_{1}^{l=2} \right) +b_{1}^{l=3} \right) \\ \nonumber
&&+ W_{21}^{l=4}\mathcal{A}_{1} \left( W_{22}^{l=3} \mathcal{A}_{1} \left( W_{12}^{l=2}x+b_{2}^{l=2} \right) +b_{2}^{l=3} \right) +b_{1}^{l=4}\\ \nonumber
&=& \mathcal{A}_{1}\left( \dfrac{-1}{x_I-x_{I-1}} \mathcal{A}_{1} \left( -x+x_I \right) +1 \right) + \mathcal{A}_{1} \left( \dfrac{-1}{x_{I+1}-x_I} \mathcal{A}_{1} \left( x-x_I \right) +1 \right) -1, \nonumber
\end{eqnarray}
where $\bm{W}=[W_{11}^{l=2},W_{12}^{l=2},W_{11}^{l=3},W_{22}^{l=3},W_{11}^{l=4},W_{21}^{l=4}]$, and $\bm{b}=[b_{1}^{l=2},b_{2}^{l=2},b_{1}^{l=3},b_{2}^{l=3},b_{1}^{l=4}]$ are the weights and biases of the connected neurons. Note that all the weights and biases are functions of nodal coordinates. The formula can be rewritten as the form of
\begin{equation}
    \mathcal{N}_I(\bm{x};\bm{x}_I^*,\bm{\mathcal{A}}),
\end{equation}
where $\bm{x}_I^*$ denotes the vector that represents the neighbor nodes of node $\bm{x}_I$ involved in $N_I(\bm{x})$. For 1D linear shape function, it should be $\bm{x}_I^*=[x_{I-1},\:x_{I},\:x_{I+1}]$. For the sake of clarity, one more layer is added to introduce the nodal value $u_I$, i.e., the formula becomes
\begin{eqnarray}
\mathscr{u}_I^{h}&=&\mathscr{N}_{I} (x;\:\bm{W},\bm{b},\:\bm{\mathcal{A}})\mathscr{u}_{I}=\mathscr{N}_{I} (x;\:\bm{x}_I^*,\:\bm{\mathcal{A}})\mathscr{u}_{I}; \mbox{ no summation on } {I}\\ \nonumber
&=& \mathcal{A}_{0}\left(\mathcal{A}_{1}\left( \dfrac{-1}{x_I-x_{I-1}} \mathcal{A}_{1} \left( -x+x_I \right) +1 \right) -0.5\right) \mathscr{u}_{I} \\ \nonumber
&&+ \mathcal{A}_{0}\left(\mathcal{A}_{1} \left( \dfrac{-1}{x_{I+1}-x_I} \mathcal{A}_{1} \left( x-x_I \right) +1 \right) -0.5\right) \mathscr{u}_{I},
\end{eqnarray}
where $\mathscr{u}_I^{h}$ and $\mathscr{u}_I$ are the interpolated displacement and nodal displacement at node $x_I$, $\bm{\mathcal{A}}=[\mathcal{A}_{0},\:\mathcal{A}_{1}]$ are the activation functions used for the construction of the DNN approximation. $\mathcal{A}_{0}(x)=x$ is an identical function. Fig. \ref{fig:1D_DNN_representation}(b) gives the DNN representation of the interpolation of the nodal displacement at node $x_I$.

\begin{figure}[h]
	\centering
	\subfigure[DNN-based 1D shape function]{\includegraphics[width=0.42\textwidth]{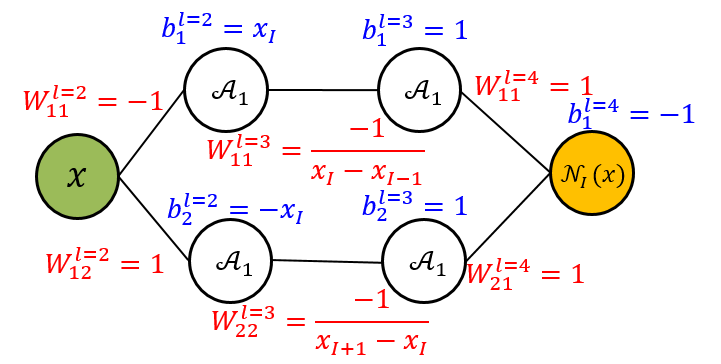}}
	\hspace{0.1in}
	\subfigure[DNN-based 1D interpolation function ]{\includegraphics[width=0.5\textwidth]{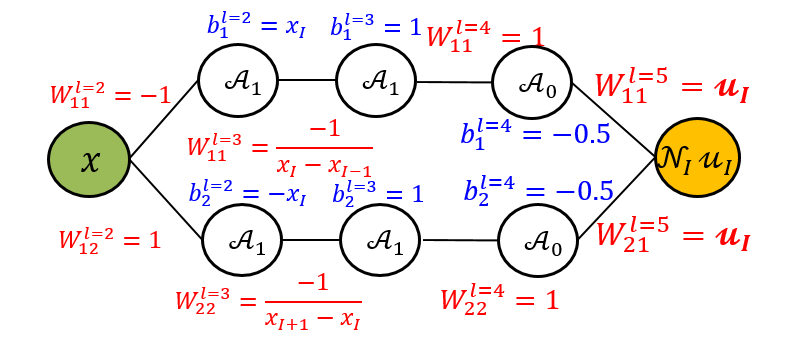}}
	\caption{Deep neural network (DNN) representation of the 1D global shape function and interpolation function.}
	\label{fig:1D_DNN_representation}
\end{figure}

Once the shape function with nodal value for an arbitrary node $x_I$ is constructed, the interpolation is obtained by assembling all DNNs, i.e.,
\begin{equation}
\mathscr{u}^{h}(x)=\sum_{I=1}^{n}\mathscr{N}_{I} (x;\:\bm{x}_I^*,\:\bm{\mathcal{A}})\mathscr{u}_{I}.
\end{equation}
Compared with classical FEM, nodal positions are introduced as additional DoFs in the optimization for HiDeNN, which increases both the local and global accuracy of the interpolants.

Reference \cite{zhang2021hierarchical} also presented the DNN representation of various rational functions including Lagrange polynomials, B-spline, Reproducing Kernel Particle Method (RKPM), NURBS, Isogeometric analysis (IGA), etc., and multidimensional shape functions.

\section{Space separated PGD}
When the domain is not intrinsically separable, fully separated representation can not be applied directly. 
\cite{gonzalez2010recent} immersed the non-separable domain onto a fully separable one. \cite{ghnatios2019advanced} used geometrical mapping to deal with layered domain, where interfaces are not planar. Now we combine representation of separated variables with FE mapping to deal with more complex geometrical cases. In section 4.1, we define the mapping to a regular parameter space by means of FE geometrical mapping. Then section 4.2 addresses several solution schemes. Finally, we take a 2D Poisson problem as illustration for the whole solution procedure in section 4.3.

\subsection{Mesh mapping and recovering for irregular domains}

\begin{figure}[h]
	\centering
	\includegraphics[width=0.9\textwidth]{}
	\caption{A quarter of ring is transformed in to a rectangular by the polar coordinates transformation.}
	\label{fig:PolarTrans}
\end{figure}

The main idea is to map original irregular domain $\Omega_{(\bm{x})}$ to a regular one $\tilde{\Omega}_{(\tilde{\bm{x}})}$, and then apply the separated representation. Fig, \ref{fig:PolarTrans} illustrates a simple example. A quarter of ring becomes a rectangular through polar transformation. Then the new representation of separated variables is shown as below,
\begin{equation}
    u^h=\sum_{q=1}^Q \tilde{X}^{(q)}(\theta)\tilde{Y}^{(q)}(r),
\end{equation}
where $\theta$ and $r$ are the functions of space coordinates $x,y$. 
\begin{figure}[h]
	\centering
	\includegraphics[width=0.9\textwidth]{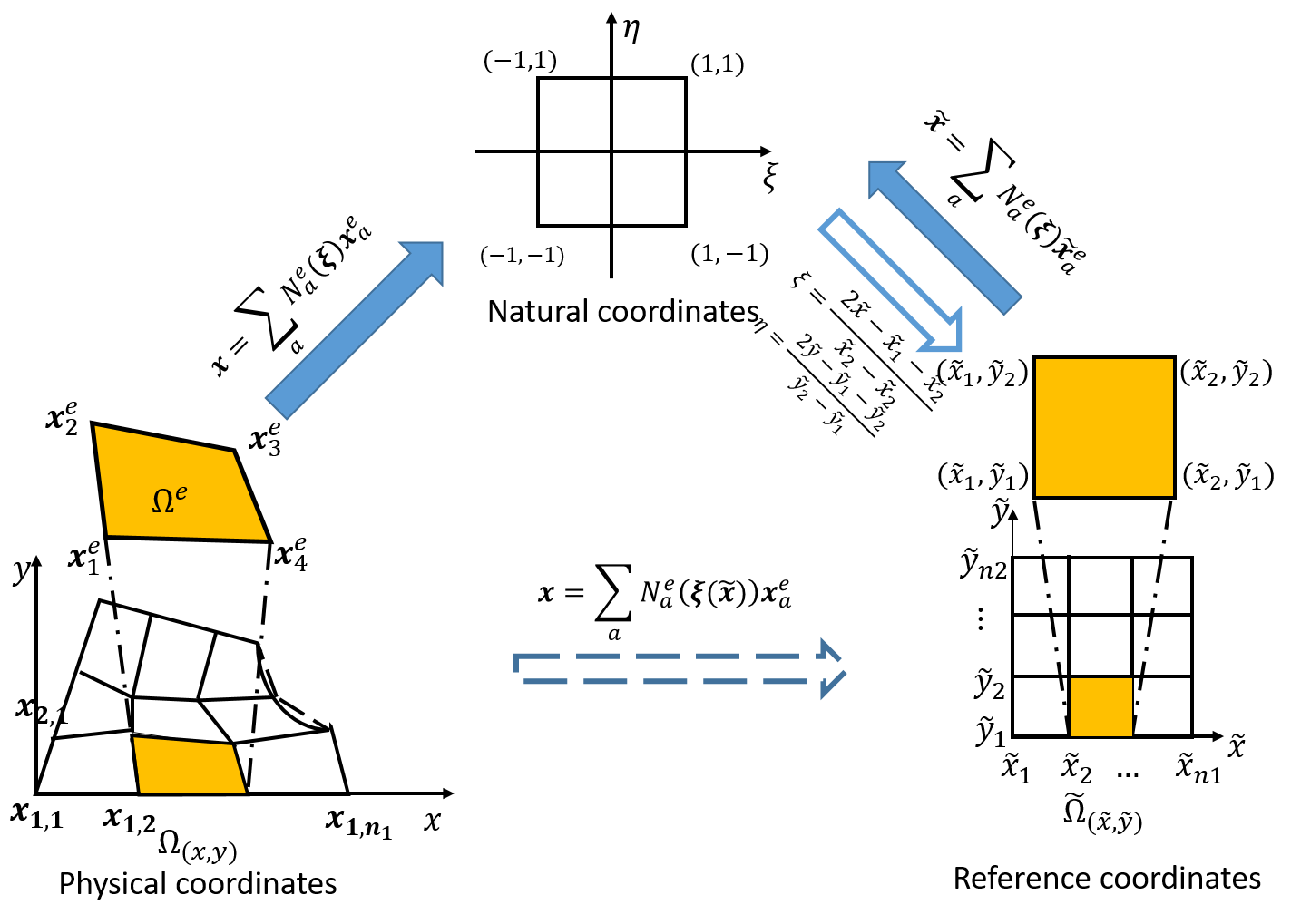}
	\caption{Illustration for the geometrical mapping. The irregular domain with irregular mesh is related to a regular domain with regular mesh by a 2-step mapping.}
	\label{fig:FETrans}
\end{figure}
By virtue of parametric transformation in FEM, we propose a general way to define this mapping as illustrated in Fig \ref{fig:FETrans}. We present a FE mesh over the 2D irregular computational domain $\Omega_{\bm{x}}$ first with nodes $\bm{x}_{i,j}, i=1,2,\cdots,n_1, j=1,2,\cdots,n_2$. Then we define a mapping to its corresponding lattice $(i,j)$. $\tilde{\bm{x}}$ is the coordinates of the transformed domain $\tilde{\Omega}_{\tilde{\bm{x}}}$. The mapping consist of two steps:

1. Mapping each element to a square or cubic

The first mapping is the classical parametric mapping in FEM. We make a change of coordinates which maps the 4-node element into a square $[-1,1]^2$ for 2D or maps the 8-node element into a cubic $[-1,1]^3$ for 3D. The coordinates of a point $\bm{\xi}$ in the square is related to the physical coordinates of a point $\bm{x}$ in the element by mappings of the form
\begin{equation}
    \bm{x}=\sum_{a=1}^{n_e} N_a^e(\bm{\xi}) \bm{x}_a^e
\end{equation}
where $n_e$ is the number of nodes of the element ($n_e=4$ for 2D and $n_e=8$ for 3D), $\bm{x}_a^e$ is the coordinates of the $a$-th node of the element, and $N_a^e(\bm{\xi})$ is the corresponding shape function.
$\bm{\xi}$ is called as natural coordinates.

2. Mapping the square to a lattice

For the sake of separated representation, we define the second mapping to assemble the square into a lattice. The transformed formula is
\begin{equation}
    \tilde{\bm{x}}=\sum_{a=1}^{n_e} N_a^e(\bm{\xi}) \tilde{\bm{x}}_a^e
\end{equation}
and the inverse transformation is
\begin{eqnarray}
    \xi&=&\dfrac{2\tilde{x}-\tilde{x}^e_1-\tilde{x}^e_2}{\tilde{x}^e_2-\tilde{x}^e_1} \\
    \eta&=&\dfrac{2\tilde{y}-\tilde{y}^e_1-\tilde{y}^e_2}{\tilde{y}^e_2-\tilde{y}^e_1} \\
    \zeta&=&\dfrac{2\tilde{z}-\tilde{z}^e_1-\tilde{z}^e_2}{\tilde{z}^e_2-\tilde{z}^e_1}.
\end{eqnarray}
The final transformed domain $\tilde{\Omega}_{\tilde{\bm{x}}}$ is called as reference domain, which is a regular domain with a regular mesh $[\tilde{x}_1, \tilde{x}_2, \cdots, \tilde{x}_{n_1}]\times[\tilde{y}_1, \tilde{y}_2, \cdots, \tilde{y}_{n_2}]\times[\tilde{z}_1, \tilde{z}_2, \cdots, \tilde{z}_{n_3}]$. $\tilde{x}^e_1,\tilde{x}^e_2,\tilde{y}^e_1,\tilde{y}^e_2,\tilde{z}^e_1,\tilde{z}^e_2$ are the coordinates of the element $[\tilde{x}^e_1,\tilde{x}^e_2]\times[\tilde{y}^e_1,\tilde{y}^e_2]\times[\tilde{z}^e_1,\tilde{z}^e_2]$ in the reference domain. For convenience, we might take the mesh in the reference domain as a lattice corresponding to the index of the nodes in the physical domain, i.e., $\tilde{x}_i=i,i=1,2,\cdots,n_1, \tilde{y}_j=j,j=1,2,\cdots,n_2, \tilde{z}_k=k,k=1,2,\cdots,n_3$. 

The whole mapping is defined as below,
\begin{equation}
    \bm{x}=\sum_{a=1}^{n_e} N_a^e(\bm{\xi}(\tilde{\bm{x}})) \bm{x}_a^e.
    \label{eq:WholeMap}
\end{equation}
Then separation of spatial variables is applicable to the reference domain, i.e., the interpolation function set is
\begin{equation}
    \tilde{\mathcal{M}}^h_Q=\left\{u^h\Bigg |u^h=\sum_{q=1}^Q \tilde{X}(\tilde{\bm{x}})\tilde{Y}(\tilde{y})\tilde{Z}(\tilde{z})=\sum_{q=1}^Q \left( \sum_{i=1}^{n_1}N_i(\tilde{x})\beta_i^{(q)} \right) \left( \sum_{j=1}^{n_2}N_j(\tilde{y})\gamma_j^{(q)} \right) \left( \sum_{k=1}^{n_3}N_k(\tilde{z})\theta_k^{(q)} \right)\right\},
\end{equation}
where $N_i(\tilde{x}),N_j(\tilde{y}),N_i(\tilde{z})$ are shape functions, and $\beta_i^{(q)}, \gamma_j^{(q)}, \theta_k^{(q)}$ are the corresponding coefficients of the $q$-th mode.
Note that $\tilde{x},\tilde{y},\tilde{z}$ are the functions of physical coordinates $x,y,z$.

\subsection{Solution schemes}

There are different solution schemes to solve the problem. A straight way is to find the solution by minimizing the variational formula with the given number of modes $Q^*$ directly, i.e.,
\begin{equation}
    u^h=\argmin_{u^h\in \tilde{\mathcal{M}}^h_{Q=Q^*}} \Pi(u^h(\bm{x};\bm{\beta}^{(q)},\bm{\gamma}^{(q)},\bm{\theta}^{(q)})).
\end{equation}
All the parameters $\beta$ are solved at the same time. 

Yet this global optimization might be expensive, so we might borrow the idea from PGD \cite{ammar2006new,chinesta2013proper}, i.e., incremental solution scheme. More precisely, the solution scheme is
\begin{equation}
\begin{array}{lll}
\text{The first mode:} &u^{PGD,(1)}=\arg \min_{\Delta u \in \tilde{\mathcal{M}}^h_{Q=1}} \Pi[u^0+\Delta u];\\
\text{For $m>1$, the $m$-th mode:} &\Delta u^{(m)}=\arg \min_{\Delta u \in \tilde{\mathcal{M}}^h_{Q=1}} \Pi[u^{PGD,(m-1)}+\Delta u], \\
\text{The PGD solution with $m$ modes:} &u^{PGD,(m)}=u^{PGD,(m-1)}+\Delta u^{(m)},
\end{array}
\end{equation}
We remarked it is also possible to solve several modes simultaneously in one incremental step.

In general, the initial guess $u_0$ is set to be zero. When dealing with the boundary conditions, $u^0$ can be arbitrary continuous functions satisfying boundary conditions. We can also set an appropriate initial guess to improve the efficiency. 

These two solution schemes have their advantages and disadvantages. With the same number of modes $Q^*$, we solve the modes one by one using the latter solution scheme while optimize all the modes together using the previous one. Thus, we have $\Pi(u^{CD})<\Pi(u^{PGD})$ and then obtain
\begin{equation}
    \| u^{CD}-u^{exact} \|_E \leq \| u^{PGD}-u^{exact} \|_E.
\end{equation}
This indicates the solution of the previous solution scheme might be better than that of the incremental way. However, the previous one might cost more.

\subsection{Illustrating the solution procedure: the 2D Poisson problem}
For the sake of simplicity and without loss of generality, we consider 2D Poisson problem with incremental solution scheme for illustration,
\begin{equation}
\left\{\begin{array}{l}
    \nabla^2 \bm{u}(x,y)+b(x,y)=0 \text{ in } \Omega_{(x,y)} \subset \mathbb{R}^2, \\
    \bm{u}|_{\partial \Omega}=\bm{0}.
\end{array}\right.
\label{eq:PoissonEq}
\end{equation}
(\ref{eq:PoissonEq}) is solved in the irregular domain $\Omega_{(x,y)}$ with homogeneous boundary conditions. 

The solution is assumed in the form of (\ref{eq:2DMS}). Then we solve it with incremental solution scheme. Let previous $q-1$ modes solved. The $q$-th mode is obtained by
\begin{equation}
    \Delta u^{(q)}=\argmin_{\Delta u \in \tilde{\mathcal{M}}^h_{Q=1}} \Pi[u^{PGD,(q-1)}+\Delta u^{(q)}],
\end{equation}
where $u^{PGD,(q-1)}$ is the sum of the previous $q-1$ modes.
We rewrite the interpolation function in the following matrix form
\begin{equation}
    u^{PGD,(q)}=u^{PGD,(q-1)}+\Delta u^{(q)}, \Delta u^{(q)}=\left((\bm{\beta}^{(q)})^T \bm{N}^{\beta}(\tilde{x})\right)\left((\bm{\gamma}^{(q)})^T \bm{N}^{\gamma}(\tilde{y})\right),
    \label{eq:PGDway}
\end{equation}
where $\bm{\beta}^{(q)}, \bm{\gamma}^{(q)}$ are the coefficient vector, and $\bm{N}^{\beta}(\tilde{x}),\bm{N}^{\gamma}(\tilde{y})$ denotes the vector containing shape functions.

Substituting (\ref{eq:PGDway}) into the variational formula (\ref{eq:PoissonVar}), we have
\begin{eqnarray}
    \Pi(u^{PGD,(q)})&=&\dfrac{1}{2}\int_{\Omega_{(x,y)}} \left(\nabla (\Delta u^{(q)})\right)^2 \mathrm{d}x\mathrm{d}y+\int_{\Omega_{(x,y)}} \left(\nabla (\Delta u^{(q)})\right)\left(\nabla (u^{PGD,(q-1)})\right) \mathrm{d}x\mathrm{d}y \\ \nonumber
    &&- \int_{\Omega_{(x,y)}} u(x,y)b(x,y)\mathrm{d}x\mathrm{d}y + \Pi(u^{PGD,(q-1)}).
\end{eqnarray}

The quadratic term with respect to $\bm{\beta}^{(q)}, \bm{\gamma}^{(q)}$ in the variational formula is given by
\begin{eqnarray}
    \int_{\Omega_{(x,y)}} \left(\nabla (\Delta u^{(q)})\right)^2 \mathrm{d}x\mathrm{d}y &=& \int_{\tilde{\Omega}_{(\tilde{x},\tilde{y})}} \left(\nabla_{(\tilde{x},\tilde{y})} (\Delta u^{(q)})\right)^T \bm{J}^{-T} \bm{J}^{-1} \nabla_{(\tilde{x},\tilde{y})} (\Delta u^{(q)}) \mathrm{det}(\bm{J}) \mathrm{d}\tilde{x}\mathrm{d}\tilde{y}.
    \label{eq:Integration}
\end{eqnarray}
with the Jacobi matrix
\begin{equation}
    \bm{J}=\dfrac{\partial(x,y)}{\partial(\tilde{x},\tilde{y})}.
\end{equation}
The gradient of $\Delta u^{(q)}$ is
\begin{equation}
    \nabla_{(\tilde{x},\tilde{y})}\Delta u^{(q)}=
    \left[\begin{array}{c}
       \dfrac{\partial}{\partial \tilde{x}}\\
       \dfrac{\partial}{\partial \tilde{y}}
    \end{array}\right]
    \Delta u^{(q)}=
    \left[\begin{array}{c}
       (\bm{\beta}^{(q)})^T \dfrac{d\bm{N}^{\beta}(\tilde{x})}{d\tilde{x}}(\bm{N}^{\gamma}(\tilde{y}))^T \gamma^{(q)}\\
       (\bm{\beta}^{(q)})^T \bm{N}^{\beta}(\tilde{x})(\dfrac{d\bm{N}^{\gamma}(\tilde{y})}{d\tilde{y}})^T \gamma^{(q)}
    \end{array}\right].
\end{equation}
Note that the expression is a nonlinear algebraic system. It is hard to solve it directly, so we use the alternating direction strategy as below:

In each iteration step,

1. Fix $\gamma$ and solve $\beta$

The quadratic term becomes
\begin{equation}
    \int_{\Omega_{(x,y)}} \left(\nabla (\Delta u^{(q)})\right)^2 \mathrm{d}x\mathrm{d}y=\int_{\tilde{\Omega}_{(\tilde{x},\tilde{y})}} \beta^{(q),T} \bm{B}^{\beta,T}(\tilde{x},\tilde{y})\bm{B}^{\beta}(\tilde{x},\tilde{y})\beta^{(q)} \mathrm{det}(\bm{J}) \mathrm{d}\tilde{x}\mathrm{d}\tilde{y}
\end{equation}
with
\begin{equation}
    \bm{B}^{\beta}(\tilde{x},\tilde{y})=\bm{J}^{-1}
    \left[\begin{array}{c}
       (\bm{\gamma}^{(q)})^T\bm{N}^{\gamma}(\tilde{y}) (\dfrac{d\bm{N}^{\beta}(\tilde{x})}{d\tilde{x}})^T\\
       (\bm{\gamma}^{(q)})^T\dfrac{d\bm{N}^{\gamma}(\tilde{y})}{d\tilde{y}} (\bm{N}^{\beta}(\tilde{x}))^T
    \end{array}\right].
\end{equation}
The stiffness matrix for $\beta^{(q)}$ is 
\begin{equation}
    \bm{K}^{\beta}=\int_{\tilde{\Omega}_{(\tilde{x},\tilde{y})}} \bm{B}^{\beta,T}(\tilde{x},\tilde{y})\bm{B}^{\beta}(\tilde{x},\tilde{y}) \mathrm{det}(\bm{J}) \mathrm{d}\tilde{x}\mathrm{d}\tilde{y}.
\end{equation}

2. Fix $\beta$ and solve $\gamma$

The quadratic term becomes
\begin{equation}
    \int_{\Omega_{(x,y)}} \left(\nabla (\Delta u^{(q)})\right)^2 \mathrm{d}x\mathrm{d}y=\int_{\tilde{\Omega}_{(\tilde{x},\tilde{y})}} \gamma^{(q),T} \bm{B}^{\gamma,T}(\tilde{x},\tilde{y})\bm{B}^{\gamma}(\tilde{x},\tilde{y})\gamma^{(q)} \mathrm{det}(\bm{J}) \mathrm{d}\tilde{x}\mathrm{d}\tilde{y}
\end{equation}
with
\begin{equation}
    \bm{B}^{\gamma}(\tilde{x},\tilde{y})=\bm{J}^{-1}
    \left[\begin{array}{c}
       (\bm{\beta}^{(q)})^T \dfrac{d\bm{N}^{\beta}(\tilde{x})}{d\tilde{x}}(\bm{N}^{\gamma}(\tilde{y}))^T\\
       (\bm{\beta}^{(q)})^T \bm{N}^{\beta}(\tilde{x})(\dfrac{d\bm{N}^{\gamma}(\tilde{y})}{d\tilde{y}})^T
    \end{array}\right].
\end{equation}
The stiffness matrix for $\gamma^{(q)}$ is 
\begin{equation}
    \bm{K}^{\gamma}=\int_{\tilde{\Omega}_{(\tilde{x},\tilde{y})}} \bm{B}^{\gamma,T}(\tilde{x},\tilde{y})\bm{B}^{\gamma}(\tilde{x},\tilde{y}) \mathrm{det}(\bm{J}) \mathrm{d}\tilde{x}\mathrm{d}\tilde{y}.
\end{equation}

If we present a regular mesh over a regular domain $\Omega_{(x,y)}$, the mapping is a linear transformation for coordinates. Let one element of the regular mesh $[x^e_1,x^e_2]\times[y^e_1,y^e_2]$. The mapping (\ref{eq:WholeMap}) reduces to
\begin{equation}
    x=\dfrac{x^e_2-x^e_1}{\tilde{x}^e_2-\tilde{x}^e_1}(\tilde{x}-\tilde{x}^e_1)+x^e_1,
    y=\dfrac{y^e_2-y^e_1}{\tilde{y}^e_2-\tilde{y}^e_1}(\tilde{y}-\tilde{y}^e_1)+y^e_1.
\end{equation}
$\bm{J}$ reduces to a diagonal matrix $\mathrm{diag}(\dfrac{x^e_2-x^e_1}{\tilde{x}^e_2-\tilde{x}^e_1},\dfrac{y^e_2-y^e_1}{\tilde{y}^e_2-\tilde{y}^e_1})$. Thus we have
\begin{equation}
    \bm{J}^{-T}\bm{J}^{-1}\mathrm{det}(J)=\dfrac{x^e_2-x^e_1}{\tilde{x}^e_2-\tilde{x}^e_1}\dfrac{y^e_2-y^e_1}{\tilde{y}^e_2-\tilde{y}^e_1},
\end{equation}
which is constant in each element and separated representation. Thus this method degenerates to the classical PGD. For irregular mesh, $\bm{J}^{-T}\bm{J}^{-1}\mathrm{det}(J)$ in (\ref{eq:Integration}) is the function of $\bm{x}$ and mostly non-separated representation. In the numerical implementation, we usually approximate it by a separated form using SVD technique, i.e.,
\begin{equation}
    \bm{J}^{-T}\bm{J}^{-1}\mathrm{det}(J)\approx 
    \left[\begin{array}{cc}
        \sum_a \phi_{11}^{(a)}(\tilde{x})\psi_{11}^{(a)}(\tilde{y}) & \sum_a\phi_{12}^{(a)}(\tilde{x})\psi_{12}^{(a)}(\tilde{y}) \\
        \sum_a\phi_{12}^{(a)}(\tilde{x})\psi_{12}^{(a)}(\tilde{y}) & \sum_a\phi_{22}^{(a)}(\tilde{x})\psi_{22}^{(a)}(\tilde{y})
    \end{array}\right].
\end{equation}
This converts the 2D integration (\ref{eq:Integration}) to the product of 1D integration along different directions ($\tilde{x}$ and $\tilde{y}$ directions in the reference domain), which might reduce the computational cost for integration.

\subsection{Numerical exmples}

\begin{figure}[htbp]
    \centering
    \subfigure[Physical domain ]{\includegraphics[width=3in]{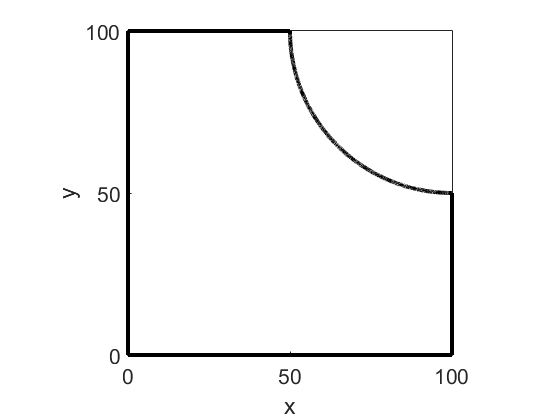}}
    \subfigure[FE mesh]{\includegraphics[width=3in]{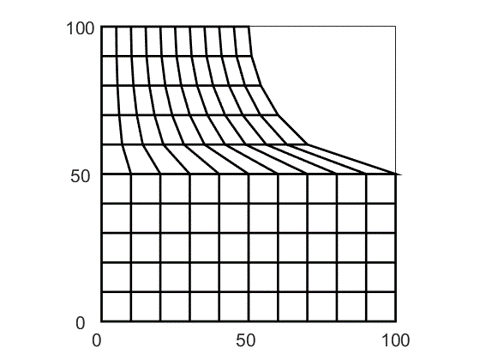}}
    \subfigure[Reference solution ]{\includegraphics[width=3in]{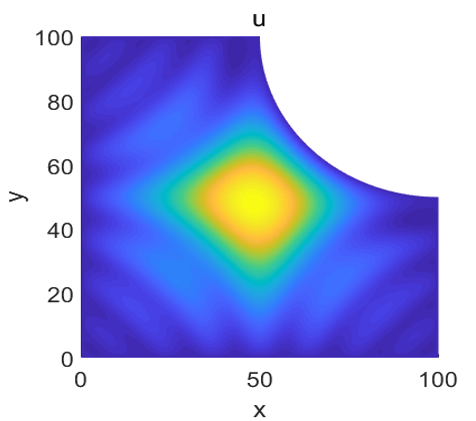}}
	\caption{Geometry and mesh for the plate. (a) Model of the plate. (b) $100$ elements mesh as an illustration. (c) Reference solution with $4.90\times 10^7$ elements.}
	\label{fig:PGDEg_Corner}
\end{figure}

\begin{figure}[h]
    \centering
    \subfigure[Physical domain ]{\includegraphics[width=2in]{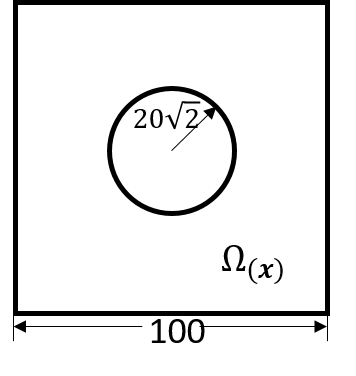}}
    \subfigure[FE mesh]{\includegraphics[width=3in]{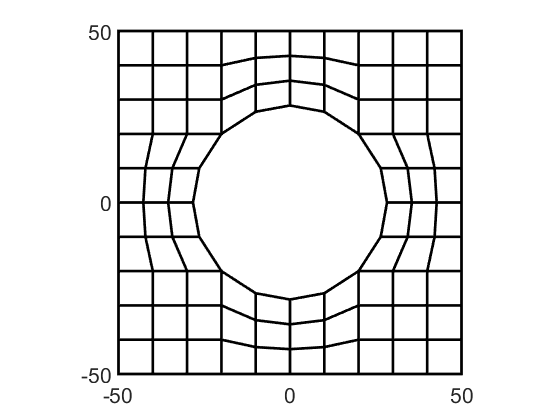}}
    \subfigure[Reference mesh]{\includegraphics[width=2.2in]{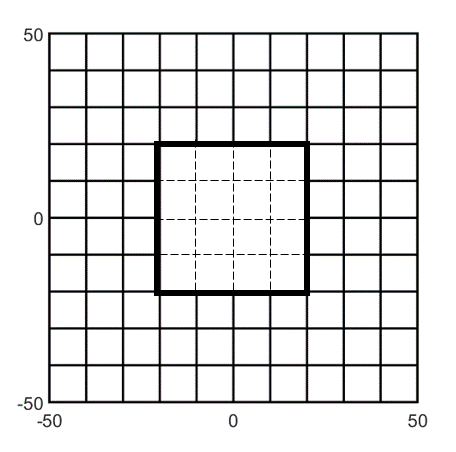}}
    \subfigure[Reference solution ]{\includegraphics[width=3in]{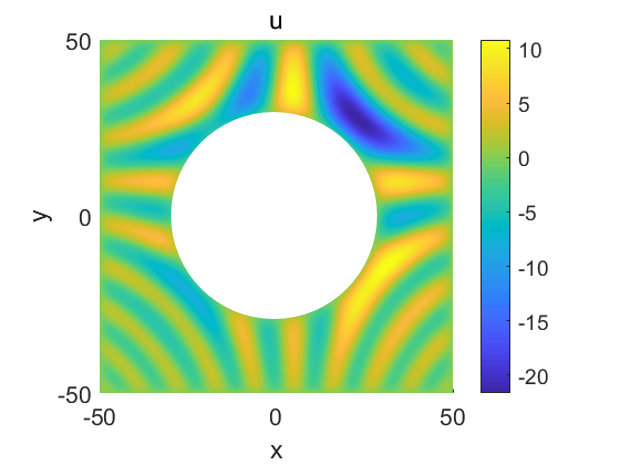}}
	\caption{Geometry and mesh for the plate with one hole. (a) Model of the plate with one hole. (b) Discretization by a $84$ elements mesh. (c) Reference mesh corresponding to $84$ elements.}
	\label{fig:PGDEg_Hole}
\end{figure}


\bibliographystyle{elsarticle-num.bst}

\nocite{*}

\end{large}
\end{document}